\pgfplotsset{width=7cm,compat=newest}
\tikzset{%
  dots/.style args={#1per #2}{%
    line cap=round,
    dash pattern=on 0 off #2/#1
  }
}
\def\N{{\mathbb N}}	
\def\Q{{\mathbb Q}}				
\def\R{{\mathbb R}}				 
\def\Z{{\mathbb Z}}				
\def\cF{{\mathcal F}}
\renewcommand{\Re}{\mathrm{Re}}
\def\1{{\bf 1}}
\newcommand{\floor}[1]{\left\lfloor #1 \right\rfloor}
\def\dist{\operatorname{dist}}
\def\dx#1{\mathrm{d}#1\ }
\renewcommand{\Re}{\mathrm{Re}}
\def\namedlabel#1#2{\begingroup					
	#2%
	\def\@currentlabel{#2}%
	\phantomsection\label{#1}\endgroup
}
\theoremstyle{plain}
\newtheorem{theorem}{Theorem}[section]		
\newtheorem{proposition}[theorem]{Proposition}		   
\newtheorem{lemma}[theorem]{Lemma}
\newtheorem{remark}[theorem]{Remark}
\newtheorem*{remark*}{Remark}
\numberwithin{equation}{section} 
\newtheoremstyle{case}{}{}{}{}{}{:}{ }{}
\theoremstyle{case}
\def\avint{\,\ThisStyle{\ensurestackMath{%
  \stackinset{c}{.2\LMpt}{c}{.5\LMpt}{\SavedStyle-}{\SavedStyle\phantom{\int}}}%
  \setbox0=\hbox{$\SavedStyle\int\,$}\kern-\wd0}\int}
\def\vep{{\varepsilon}}
\def\young{A_{\text{Y}}}
\def\heat{A_{\text{par}}}
\def\GNS{A_{\text{GNS}}}
\begin{document}

	\title{Upper bounds on homogeneous fractional Gagliardo-Nirenberg-Sobolev constants via parabolic estimates}
 
  \author{
Michael Hott\orcidlink{0000-0003-4243-6585}%
	\thanks{E-mail: \texttt{mhott@umn.edu}}}

    
    \maketitle
	
	\begin{abstract}
        Common proofs of the Gagliardo-Nirenberg-Sobolev (GNS) do not provide explicit bounds on the involved constants, unless a sharp constant is being determined. GNS inequalities naturally occur in error estimates for numerical approximations. In particular, bounds on GNS constants allow us to provide explicit a priori estimates. We provide an algorithm that determines upper bounds on the non-endpoint homogeneous GNS inequalities in terms of explicit upper bounds for Young's convolution inequality and parabolic estimates. Our method is based on the heat-kernel representation of the inverse Laplacian, from which we deduce interpolation estimates.
	\end{abstract}
	\paragraph{Statements and Declarations} The authors do not declare financial or non-financial interests that are directly or indirectly related to the work submitted for publication.
	
	\paragraph{Data availablity} The manuscript has no associated data.
	 \paragraph{Acknowledgments} My research was supported by 'University of Texas at Austin Provost Graduate Excellence Fellowship' and NSF grants DMS-1716198 and DMS-2009800 through Prof. Thomas Chen (UT Austin). I want to thank Bill Beckner, Haim Brezis, Dmitriy Bilyk, Svitlana Mayboroda, Nata\v{s}a Pavlovi\'c, Petru Mironescu, Petronela Radu, Pablo Stinga for pointing out helpful references. I want to thank Hao Jia and Lutz Weis for stimulating discussions.

	\tableofcontents

	\section{Introduction}
    
    Let $d\in\N$ denote the space dimension. Let $s,s_1,s_2\in\R$, $p,p_1,p_2\in[1,\infty]$ be such that
    \begin{equation}\label{ass-para}
        \frac1{p_2}-\frac{s_2}d \, < \, \frac1p -\frac{s}d \, < \, \frac1{p_1}-\frac{s_1}d \, ,
    \end{equation}
    and $\theta\in(0,1)$ be given by
    \begin{equation} \label{def-theta}
        \frac1p -\frac{s}d \, = \, \theta \Big(\frac1{p_1}-\frac{s_1}d\Big) +(1-\theta) \Big(\frac1{p_2}-\frac{s_2}d\Big) \, .
    \end{equation}
    For this range of parameters, we consider the homogeneous \emph{Gagliardo-Nirenberg-Sobolev} (GNS) inequality
    \begin{equation}\label{GNS}
        \||\nabla|^s f\|_{L^p(\R^d)} \, \lesssim \, \||\nabla|^{s_1}f\|_{L^{p_1}(\R^d)}^{\theta}\||\nabla|^{s_2}f\|_{L^{p_2}(\R^d)}^{1-\theta} \, .
    \end{equation}
    This interpolation inequality goes back to Gagliardo \cite{gagliardo59} and Nirenberg \cite{nirenberg59}, where they proved the case $s,s_1,s_2\in\N_0$, and where they assume
    \begin{align} \label{ass-GN}
        s \, = \, \theta s_1 + (1-\theta) s_2 \, , \quad \frac1p \, = \, \frac{\theta}{p_1} + \frac{1-\theta}{p_2} \, ,
    \end{align}
    where $\theta\in(0,1)$, without the restriction \eqref{ass-para}. The original proofs by Nirenberg and Gagliardo are based on establishing an interplation inequality on intervals in dimension 1, for $s=1$, $s_1=0$ and $s_2=2$, and then generalized to arbitrary $s\in\N$ via induction, see, e.g., \cite{fiorenza2021detailed} for a review. One can also prove \eqref{GNS} via real or complex interpolation, covering, e.g., the cases $0\leq s_1<s_2<\infty$, $p_1=p_2=p=1$, see \cite[Section 7.32]{adams-fournier-Sobolev}. Other proofs employ wavelets, see, e.g., \cite{cohen-wavelet-00,cohen-dahmen-daubechies03}. \eqref{GNS} can also be formulated in Besov spaces \cite{hajaiej11}.
    \par Brezis and Mironescu have studied sufficient and necessary condition for the inhomogeneous equation associated with \eqref{GNS} to hold, see \cite{brezis-miro-GN-2018}. They assume $s,s_1,s_2\geq0$, $p,p_1,p_2\in[1,\infty]$ with the interpolation assumption \eqref{ass-GN}. Then they establish the necessary and sufficient condition that 
    \begin{equation}\label{eq-brezis-nec-suff}
        s_2 \in \N \, , \, p_2=1 \, , \, 0 \, < \, s_2-s_1 \, \leq \, 1-\frac1{p_1} 
    \end{equation}
    does not hold in order for
    \begin{equation} \label{GNS-inh}
        \|f\|_{W^{s,p}} \, \lesssim \, \|f\|_{W^{s_1,p_1}}^\theta \|f\|_{W^{s_2,p_2}}^{1-\theta}
    \end{equation}
    to be true. In a subsequent work, they studied the case $s,s_1,s_2\geq 0$, $p,p_1,p_2\in[1,\infty]$ satisfying \eqref{def-theta}, with the additional restriction
    \begin{equation}\label{ass-BM}
        s \, < \, \theta s_1 + (1-\theta)s_2 \, , \, \quad (s_1,p_1)\neq(s_2,p_2) \, .
    \end{equation}
    Then they proved that \eqref{GNS} is valid unless either of the following conditions is satisfied:
    \begin{enumerate}
        \item $d=1$, $s_2\in\N_0$, $1<p_1\leq\infty$, $p_2=1$, $s_1=s_2-1+\frac1{p_1}$, and either $[1<p_1<\infty,s=s_2-1]$ or $[s_2+\frac\theta{p_1}-1 < s < s_2 +\frac{\theta}{p_1}-\theta]$ \, .
        \item $d\geq 1$, $s_1<s_2$, $s_1-\frac{d}{p_1}=s_2-\frac{d}{p_2}=s \in\N_0$, $p=\infty$, $(p_1,p_2)\neq(\infty,1)$ (for every $\theta\in(0,1)$).
        \item $s_1\leq s\leq s_2$
        \begin{enumerate}
            \item $d=1$, $s_2\in\N$, $1<p_1<\infty$, $p_2=1$, $s_1=s_2-1+\frac1{p_1}$, $s_2+\frac\theta{p_1}-1<s<s_2+\frac\theta{p_1}-\theta$, and $s\geq s_1$.
            \item $d\geq 1$, $p_1=\infty$, $1<p_2<\infty$, $p=\infty$, $s_1=s\in\N_0$, $s_2=s+\frac{d}{p_2}$ (for every $\theta\in(0,1)$).
        \end{enumerate}
    \end{enumerate}
    Observe that \eqref{ass-para} excludes all of these cases. 
    \par For some special choices of parameters of $s,s_1,s_2$, $p,p_1,p_2$, there are many works establish \emph{sharp} constants. In the case $p_1=p_2=2$, $s_1=1$, with $|\nabla|$ replaced by $\nabla$, $s=s_2=0$, $0<p<\frac{2d}{d-2}$, Weinstein \cite{weinstein82} recognized the relationship between the GNS inequality and a sharp criterion for the global wellposedness of the nonlinear Schr\"odinger equation 
    \begin{equation}
        i\partial_t \phi \, = \, -\frac12\Delta \phi  - |\phi|^{2\sigma}\phi \, ,
    \end{equation}
    where $\sigma=\frac{p-2}2$. Using symmetric rearrangements, Talenti \cite{talenti76} proved the sharp Sobolev inequality in case $p\in(1,\infty)$, which corresponds to the endpoint case $\theta=1$, $s=s_2=0$, $s_1=1$. Another approach relates the optimal GNS inequality with optimal transport, see \cite{agueh2006sharp,cordero-villani-02}, in the range $p_1,p_2,p\in(1,\infty)$, $s=s_2=0$, $s_1=1$. Other approaches for this range of parameters are based on PDEs \cite{agueh2008sharp}, and other methods \cite{beckner02,bolley2020new,delpinodolbeatGN02,delpino-dolbeaut-logsob-03,nguyen2021}. The optimal constant for integer $s,s_1\in\N_0$, $s_2=0$ with $p_1=p_2=2$ has been determined in \cite{morosi2018constants}; they also include the endpoint case $\theta=1$. For works studying the sharp GN inequality involving fractional derivatives, we refer to \cite{bellazzinifrank14,franklieb2012,franksilvestre,fernandez22,Zhang21}. For related works, we refer to \cite{bellazzini2018sharp,chen2011sharp,dai23,dolbeault2014one,dolbeaut2018,lundholm2018methods,martin2007sharp,morosi2018constants,nguyen2015sharp,riviereStrzelecki05,van2022fractional}. 
    \par Apart from the cases for which optimal constants are known, upper constants are usually not explicitly provided in the known proofs. The goal of this work is to provide a unified framework with an algorithm to determine upper bounds on the non-endpoint GNS constants.

    \subsection{Application to approximation error estimates}

    Before turning to our results, we shall provide two instances for which knowledge of bounds on GNS constants provides an order of magnitude for approximation errors.
    \paragraph{Discretization error in Riemann sums}

    In the following, denote
    \begin{equation}\label{def-lattice-const}
        \Xi(d,\delta) \, := \, \sum_{n\in\Z^d\setminus\{0\}}\frac1{|n|^{d+\delta}} \quad \forall d\in\N, \, \delta>0 \, .
    \end{equation}

    In \cite[Lemma 5.5]{chenhott2023}, the following result was proved for $d=3$, see also \cite{Trefethen2014}.
    
    \begin{proposition}[Riemann sum rule]\label{prop-riem-discr.}
        Let $\vep,\delta>0$, $d\in\N$. Then we have for all $f\in L^1(\R^d)\cap W^{1,d+\delta}(\R^d)$ that
        \begin{align}
            \Big|\vep^d\sum_{x\in \Z^d}f(\vep x) \, - \, \int_{\R^d}\dx{x}f(x)\Big| \, \leq \, \frac{\Xi(d,\delta)}{(2\pi)^{d+\delta}}\||\nabla|^{d+\delta}f\|_1 \vep^{d+\delta} \, .
        \end{align}
    \end{proposition}

    We have the following bound on $\Xi(d,\delta)$.

    \begin{lemma}\label{lem-lattice-const-bd}
        Let $\zeta$ denote the Riemann $\zeta$-function. For any $d\in\N$, $\delta>0$, we have that
        \begin{equation}
            \Xi(d,\delta) \, \leq \, \sum_{k=1}^d\binom{d}{k}2^k \frac1{k^{\frac{d+\delta}2}}\zeta\Big(\frac{d+\delta}k\Big)^k \, .
        \end{equation}
    \end{lemma}
    For convenience, we present proofs of both, Prop. \ref{prop-riem-discr.} and Lemma \ref{lem-lattice-const-bd} in Appendix \ref{sec-error-est}.
    \par In applications, $f$ often is a concatenation of functions and we would like to express a bound terms of norms of the concatenated functions instead. For integer derivatives, one can use the Fa\`{a} di Bruno formula, see, e.g., \cite{turcu2020vectorFdB}. In the case of fractional derivatives, there is currently no generalization, see, e.g., \cite{Podlubny1999fractional,Tarasov2016Fractional}. Using the GNS inequality instead, one can interpolate between integer derivatives, to which one can apply the regular Fa\`{a} di Bruno formula. 
    \par One may wonder why in Proposition one does not take $d+\delta$ to be integer, in order to apply the regular chain rule. One reason for which it can be desirable to start with a fractional derivative instead, is that for rough choices of $f$ the norms can deteriorate badly. After interpolating with the GNS inequality, the worse integer derivative comes with an additional power which is less than 1. More precisely, let $r\in(0,\infty)$, and let $\floor{x}$ denote the smallest integer larger than $x$, and $\{x\}=x-\floor{x}$ denote the fractional part. By the GNS inequality \eqref{GNS}, we have that
    \begin{equation}
        \||\nabla|^rf\|_1 \, \lesssim \, \|(-\Delta)^{\floor{\frac{r}2}}f\|_1^{1-\{\frac{r}2\}}\|(-\Delta)^{\floor{\frac{r}2}+1}f\|_1^{\{\frac{r}2\}} \, .
    \end{equation}

    \paragraph{Error in ergodic approximation}
    In ergodic theory, the \emph{Birkhoff ergodic theorem} plays a central role, see, e.g., \cite{KerrLi-erg-th}. Let $\alpha\in[0,1)\setminus \Q$ and $f\in L^1_{\mathrm{loc}}(\R)$ be $\Z$-periodic. Then the Birkhoff ergodic theorem states that for almost every $x_0\in[0,1]$ 
    \begin{equation}
        \lim_{n\to\infty}\frac1n\sum_{k=1}^nf(x_0+k\alpha) \, = \, \int_0^1\dx{x}f(x) \, .
    \end{equation}
    In order to obtain a rate of convergence, one needs to impose a \emph{Diophantine condition} on $\alpha$. We say that $\alpha$ is $(\sigma,K)$-\emph{Diophantine} with $K>0$, $\sigma\geq0$ iff for any $n\in\Z$
    \begin{equation}\label{def-diophantine}
        \dist(\alpha n,\Z) \, \geq \, \frac{K}{|n|^{1+\sigma}} \, .
    \end{equation}
    Lebesgue-almost every $\alpha\in[0,1]$ is Diophantine. Then we have the following convergence rate result, see also \cite{cazeaux-luskin-CB-2017}.
    
    \begin{proposition}[Quantitative ergodic theorem]\label{prop-erg-thm}
        Let $\alpha\in[0,1)$ be $(K,\sigma)$-Diophantine and $f\in L^1_{\mathrm{loc}}(\R)$ be $\Z$-periodic. Denote $\hat{f}(p):=\int_0^1\dx{x}e^{-2\pi i xp}f(x)$ and assume that $|\cdot|^{1+\sigma}\hat{f}\in \ell^1(\Z\setminus\{0\})$. Then we have that
        \begin{equation}
            \Big|\frac1n\sum_{k=1}^nf(x_0+k\alpha) \, - \, \int_0^1\dx{x}f(x)\Big| \, \leq \, \frac{\||\cdot|^{2+\sigma+\delta}\hat{f}\|_{\ell^1(\Z\setminus\{0\})}}{2Kn} \, .
        \end{equation}
    \end{proposition}

    \begin{remark}
        Following the ideas of the proof of Prop. \ref{prop-riem-discr.}, one can show that for any $\delta>0$
        \begin{equation}
            \||\cdot|^{1+\sigma}\hat{f}\|_{\ell^1(\Z\setminus\{0\})} \, \leq \, \Xi(1,1+\delta)\||\nabla|^{2+\sigma+\delta}f\|_{L^1([0,1])} \, .
        \end{equation}
        In particular, it suffices in Proposition \ref{prop-erg-thm} to assume that $f\in W^{2+\sigma+\delta}([0,1])$. We then proceed as above to interpolate the fractional Sobolev norms by integer Sobolev norms.
    \end{remark}

    For convenience, we also present a proof of Prop. \ref{prop-erg-thm} in Appendix \ref{sec-error-est}.

    \section{Preliminaries and results}
    
    In our estimates, Young's inequality in its quantitative version will play a crucial role. Let $1\leq p,q,r\leq \infty$ be such that 
    \begin{equation}\label{eq-young-cond}
            \frac1q+\frac1r=1+\frac1p \, .
    \end{equation}
    Throughout this work let $\infty^0:=1$. Define 
    \begin{equation}\label{def-young-const}
            \young(p,q,r,d) \, = \, \begin{cases}\Big(\frac{(p')^{1/p'}q^{1/q}r^{1/r}}{p^{1/p}(q')^{1/q'}(r')^{1/r'}}\Big)^{d/2} \, , & \mbox{\ if\ } 1<p,q,r<\infty \, ,\\
            1  & \, \mbox{\ else}  \, .
            \end{cases}
    \end{equation}
    Then Beckner \cite{beckner1975} and Brascamp-Lieb \cite{brascamplieb1976} have proved that, when $1<p,q,r<\infty$
    \begin{equation}\label{eq-young}
        \|f*g\|_p \, \leq \, \young(p,q,r,d)\|f\|_q\|g\|_r 
    \end{equation}
    and that $\young(p,q,r,d)$ is optimal. The inequality also holds for the end-points $1\leq p,q,r\leq \infty$ satisfying \eqref{eq-young-cond}, see, e.g., \cite{liebloss}. 
    
    \par Let $\Gamma$ denote the Gamma function. Let $1\leq r\leq p \leq \infty$, and let $q\in[1,\infty]$ be given by
    \begin{equation}
        1+\frac1p \, = \, \frac1q+\frac1r \, .
    \end{equation}
    Let $\{x\}$ and $\floor{x}$, respectively, denote the fractional and integer part of $x\in\R$, respectively. First let $s\geq0$.
    \par If $\{s/2\}=0$, we define, in case $r<p$,
    \begin{align} \label{def-heat-int-r<p}
        \begin{split}
        \MoveEqLeft \heat(p,r,s,d) \\
        & := \, \frac{\young(p,q,r,d)2^s}{(4\pi)^{\frac{d}2(\frac1r-\frac1p)}q^{\frac{d}{2q}}(q')^{\frac{d}{2q'}}}\frac{\Gamma(\frac{d+s}2)}{\Gamma(\frac{d}2)s^{\frac{s}2}} \Big[\frac{s}2+\frac{d}2\Big(\frac1r-\frac1p\Big)\Big]^{\frac{s}2+\frac{d}2(\frac1r-\frac1p)} \, ,
        \end{split}
    \end{align}
    and, if $r=p$,
    \begin{equation}\label{def-heat-int-r=p}
        \heat(p,p,s,d) \, := \, \young(p,1,p,d)\frac{\Gamma(\frac{d+s}2)2^{\frac{s}2}}{\Gamma(\frac{d}2)} \, .
    \end{equation}
    If $\{s/2\}>0$, define
    \begin{align}\label{def-heat-frac}
        \begin{split}
            \MoveEqLeft\heat(p,r,s,d)\\
            :=& \, \heat\Big(p,r,2\floor{\frac{s}2}+2,d\Big) \, \frac{\Gamma\big(\frac{s}2+1+\frac{d}2(\frac1r-\frac1p)\big)}{\Gamma\big(\floor{\frac{s}2}+1+\frac{d}2(\frac1r-\frac1p)\big)} \, .
        \end{split}
    \end{align}
    \par Now let $s<0$ and assume
    \begin{equation}
        \frac1r \, > \, \frac1p -\frac{s}d \, .
    \end{equation}
    Define
    \begin{align} \label{def-heat-inv<inf}                  \heat(p,r,s,d)
            := \, \frac{\young(p,q,r,d)}{(4\pi)^{\frac{d}2(\frac1r-\frac1p)}q^{\frac{d}{2q}}}\frac{\Gamma(\frac{d}2(\frac1r-\frac1p)+\frac{s}2)}{\Gamma(\frac{d}2(\frac1r-\frac1p))} \, .
    \end{align}
    
    \par With this notation, we prove the following result.

        \begin{theorem}[Parabolic estimates]\label{thm-heat-ST}
            Let $f\in C^\infty_c(\R^d)$, and $t>0$. Let $s\in\R$ and $1\leq r\leq p \leq\infty$ be such that either $s\geq0$ or
            \begin{equation}        0 \, < \, -s \, < \, d\Big(\frac1r \, - \, \frac1p\Big) \, .
            \end{equation}
            Then we have that
            \begin{equation}
            \||\nabla|^s e^{t\Delta}f\|_p \,
            \leq \, \heat(p,r,s,d) t^{-\frac{s}2-\frac{d}2(\frac1r-\frac1p)}\|f\|_r \, .
            \end{equation}
        \end{theorem}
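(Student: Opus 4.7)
My plan is to treat three regimes in order, each reducing to the previous via a Beta--function integral: (i) $s\geq0$ with $s/2\in\N_0$, (ii) $s\geq0$ with $\{s/2\}\in(0,1)$, and (iii) $s<0$.

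For case (i), use the commutation $|\nabla|^se^{t\Delta}=(-\Delta)^{s/2}e^{t\Delta}$ to write $|\nabla|^se^{t\Delta}f=f*(-\Delta)^{s/2}K_t$ with $K_t(x)=(4\pi t)^{-d/2}e^{-|x|^2/(4t)}$. Young's inequality \eqref{eq-young} with the Beckner--Brascamp--Lieb constant $\young(p,q,r,d)$ reduces the task to bounding $\|(-\Delta)^{s/2}K_t\|_q$, where $q$ is fixed by \eqref{eq-young-cond}; parabolic scaling $(-\Delta)^{s/2}K_t(x)=t^{-d/2-s/2}\bigl((-\Delta)^{s/2}K_1\bigr)(x/\sqrt{t})$ produces the correct $t^{-s/2-\gamma}$ dependence with $\gamma:=\frac{d}{2}(\frac1r-\frac1p)=\frac{d}{2q'}$. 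In the diagonal subcase $r=p$ (so $q=1$), I would use the pointwise identity $(-\Delta)^{s/2}K_t(x)=K_t(x)\,\mathcal{P}_{s/2}(x,t)$ with $\mathcal{P}_{s/2}$ the polynomial recovered from $\widehat{(-\Delta)^{s/2}K_t}=|\xi|^se^{-t|\xi|^2}$, apply the triangle inequality to $\mathcal{P}_{s/2}$, and evaluate the resulting integrals via the Gaussian moments $\int_{\R^d}|x|^{2k}K_t\,dx=(2t)^k\Gamma(k+d/2)/\Gamma(d/2)$; a binomial collapse yields $2^{s/2}\Gamma((d+s)/2)/\Gamma(d/2)$ as in \eqref{def-heat-int-r=p}. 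For $r<p$ I would upgrade via the semigroup decomposition $(-\Delta)^{s/2}K_t=\bigl((-\Delta)^{s/2}K_{\mu t}\bigr)*K_{(1-\mu)t}$, apply Young with the $L^1\times L^q$ split (using the just-established diagonal $L^1$ bound on the derivative part and $\|K_{(1-\mu)t}\|_q=(4\pi(1-\mu)t)^{-\gamma}q^{-d/(2q)}$ on the bare Gaussian), and minimize $\mu^{-s/2}(1-\mu)^{-\gamma}$ over $\mu\in(0,1)$; the minimizer $\mu^\star=(s/2)/(s/2+\gamma)$ produces the characteristic factor $[s/2+\gamma]^{s/2+\gamma}s^{-s/2}$ of \eqref{def-heat-int-r<p}.

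For case (ii), set $\alpha:=\{s/2\}\in(0,1)$ and $m:=\lfloor s/2\rfloor$, and factor $|\nabla|^s=|\nabla|^{2m+2}\,|\nabla|^{-2(1-\alpha)}$. Applying the heat-kernel representation of $|\nabla|^{-2(1-\alpha)}$ together with the semigroup property yields
\begin{equation*}
|\nabla|^se^{t\Delta}f \, = \, \frac{1}{\Gamma(1-\alpha)}\int_0^\infty \tau^{-\alpha}\,|\nabla|^{2m+2}e^{(t+\tau)\Delta}f\,d\tau\, .
\end{equation*}
Minkowski's inequality in $L^p$, case (i) applied at $s'=2m+2$, and the substitution $\tau=tu$ reduce the remaining $\tau$-integral to a standard Beta--function identity, producing the $\Gamma$-quotient of \eqref{def-heat-frac}. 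For case (iii), setting $\sigma:=-s/2>0$, apply the heat-kernel representation directly to $|\nabla|^s$:
\begin{equation*}
|\nabla|^se^{t\Delta}f \, = \, \frac{1}{\Gamma(\sigma)}\int_0^\infty \tau^{\sigma-1}e^{(t+\tau)\Delta}f\,d\tau\, .
\end{equation*}
Young against $K_{t+\tau}$ inside the integral followed by $\int_0^\infty u^{\sigma-1}(1+u)^{-\gamma}\,du=\Gamma(\sigma)\Gamma(\gamma-\sigma)/\Gamma(\gamma)$ delivers \eqref{def-heat-inv<inf}; the integrability at $u=\infty$ requires precisely $\sigma<\gamma$, i.e., the stated condition $-s<d(\frac1r-\frac1p)$.

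The main technical obstacle I expect is case (i) with $r<p$: the $L^1$ bound on $(-\Delta)^{s/2}K_{\mu t}$ obtained via the binomial-triangle argument of the diagonal subcase must coordinate with the subsequent optimization in $\mu$ so that the precise constant \eqref{def-heat-int-r<p} falls out without spurious loss. Once case (i) is fully in place, cases (ii) and (iii) follow essentially from the single Beta-function identity $\int_0^\infty u^{a-1}(1+u)^{-a-b}\,du=\Gamma(a)\Gamma(b)/\Gamma(a+b)$.
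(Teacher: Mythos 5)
Your proposal is correct and follows essentially the paper's route: the Young/Gaussian base bound, the even-integer case via an $L^1$ estimate on $(-\Delta)^m G_t$ combined with the semigroup splitting and the minimization of $\mu^{-s/2}(1-\mu)^{-\gamma}$ (the paper's Lemma \ref{lem-inv-pow-min}), and then Beta-integral subordination for $\{s/2\}>0$ and for $s<0$, where your factorization $|\nabla|^s=|\nabla|^{2m+2}|\nabla|^{-2(1-\{s/2\})}$ yields the identical integral the paper derives from the Balakrishnan representation of $(-\Delta)^{\{s/2\}}$. The one thin spot is the asserted ``binomial collapse'' to $2^{s/2}\Gamma(\frac{d+s}{2})/\Gamma(\frac{d}{2})$ in the diagonal case: that triangle-inequality-plus-Gaussian-moments evaluation is precisely where the paper expends its main technical effort (Leibniz, Fa\`a di Bruno, and Bell-polynomial generating functions in Lemma \ref{lem-dif-smoothing-form}), so the constant you state is the right one, but this step is a genuine computation rather than a one-line identity and would need to be written out, whereas the subsequent $\mu$-optimization you worry about is the easy part.
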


        \begin{remark}
            Theorem \ref{thm-heat-ST} is a standard result known to experts working with parabolic equations. We list the result here as we were not able to find a reference that provides explicit constants.
        \end{remark}

        \begin{remark}\label{rem-sob}
            The case $-s= d(\frac1r-\frac1p)$ would correspond to a Sobolev inequality. The methods applied in this work did not allow us to treat this case. In fact, observe that the involved constant diverges as
            \begin{equation}
                \Gamma
                \Big(\frac{d}2\Big(\frac1r-\frac1p\Big)+\frac{s}2\Big) \sim \Big(\frac{d}2\Big(\frac1r-\frac1p\Big)+\frac{s}2\Big)^{-1}
            \end{equation}
            as $-s\nearrow\frac{d}2(\frac1r-\frac1p)$, and that we are interested in the limit $(-s,t)\to (\frac{d}2(\frac1r-\frac1p)^-,0^+)$ along a suitable subsequence $((-s_n,t(s_n)))_{n\in\N}$.
            \par Instead, we refer to, e.g., \cite{franklieb2012} for a class of sharp fractional Sobolev inequalities.
        \end{remark}

        For the second statement, let $s,s_1,s_2\in\R$, $p,p_1,p_2\in[1,\infty]$ be such that
        \begin{equation}
            \frac1{p_2}-\frac{s_2}d \, < \, \frac1p -\frac{s}d \, < \, \frac1{p_1}-\frac{s_1}d \, ,
        \end{equation}
        see \eqref{ass-para}, and $\theta\in(0,1)$ be given by
        \begin{equation}
            \frac1p -\frac{s}d \, = \, \theta \Big(\frac1{p_1}-\frac{s_1}d\Big) +(1-\theta) \Big(\frac1{p_2}-\frac{s_2}d\Big) \, ,
        \end{equation}
        see \eqref{def-theta}.
        Define
        \begin{align}\label{def-Sigma}
            \begin{split}
                \MoveEqLeft \Sigma \, := \, \Sigma(p_1,p_2,p,s_1,s_2,s,d) \, := \\
                & \, \Bigg\{ (\beta_1,\beta_2,r_1,r_2,q_1,q_2,\sigma) \in (\theta,1)\times(0,\theta)\times[1,\infty]^4\times(0,\infty)\Big) \mid \\
                & \, \sigma \, > \, \frac{s_2-s}2 \, - \, \frac{d}{2p_2} \, ,\\
                & \, \frac1p \, - \, (1-\beta_1)\Big(\frac1{p_{2}}-\frac{s_{2}-s-2\sigma}d\Big) \, < \, \frac{\beta_1}{r_1} \, < \, \beta_1\Big(\frac1{p_1}-\frac{s_1-s-2\sigma}d\Big) \, , \\
                & \, \frac1p \, - \, \beta_2\Big(\frac1{p_1}-\frac{s_1-s-2\sigma}d\Big) \, < \, \frac{1-\beta_2}{r_2} \, < \, (1-\beta_2)\Big(\frac1{p_2}-\frac{s_2-s-2\sigma}d\Big) \, , \\
                & \, \frac1p \, = \, \frac{\beta_1}{r_1} \, + \, \frac{1-\beta_1}{q_1} \, = \, \frac{1-\beta_2}{r_2} \, + \, \frac{\beta_2}{q_2} \Bigg\} \\
                & \, \cup \{1\}\times\{0\}\times\{p\}^2\times [1,\infty]^2\times \Big(\frac{s_2-s}2 \, - \, \frac{d}{2p_2},\infty\Big) \, .
            \end{split}
        \end{align}
        Assumption \eqref{ass-para} ensures that $\Sigma\neq\emptyset$.
        Define
        \begin{align}
            \begin{split}
                \MoveEqLeft\GNS(p,p_1,p_2,s,s_1,s_2,d)\\
                :=& \, \inf_{(\beta_1,\beta_2,r_1,r_2,q_1,q_2,\sigma)\in\Sigma}\frac4{d \, \Gamma(\sigma)\big(\frac1{p_1}-\frac1{p_2}-\frac{s_1-s_2}d\big)}\\
                & \, \Bigg(\frac{\heat(q_2,p_1,s+2\sigma-s_1,d)^{\beta_2}\heat(r_2,p_2,s+2\sigma-s_2,d)^{1-\beta_2}}{\theta-\beta_2}\Bigg)^{\frac{\theta-\beta_2}{\beta_1-\beta_2}}\\
                & \, \Bigg(\frac{\heat(r_1,p_1,s+2\sigma-s_1,d)^{\beta_1}\heat(q_1,p_2,s+2\sigma-s_2,d)^{1-\beta_1}}{\beta_1-\theta}\Bigg)^{\frac{\beta_1-\theta}{\beta_1-\beta_2}} \, .
            \end{split}
        \end{align}
        
        \begin{theorem}[Gagliardo-Nirenberg-Sobolev]\label{thm-GNS}
            Let $f\in C^\infty_c(\R^d)$. Assume that $p,p_1,p_2\in[1,\infty]$, $s,s_1,s_2\in\R$ satisfy \eqref{ass-para} and recall the definition of $\theta\in(0,1)$ from \eqref{def-theta}. Then we have that
            \begin{align}
                \begin{split}
                \MoveEqLeft\||\nabla|^s f\|_p \, \leq \, \GNS(p,p_1,p_2,s,s_1,s_2,d)\| |\nabla|^{s_1} f\|_{p_1}^{\theta}\| |\nabla|^{s_2}  f\|_{p_2}^{1-\theta} \, .
                \end{split}
            \end{align}
        \end{theorem}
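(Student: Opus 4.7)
The plan is the one announced in the abstract: represent $|\nabla|^s f$ via the heat-kernel integral formula for $|\nabla|^{-2\sigma}$, split the integral at a scale $T>0$, on each piece apply Theorem \ref{thm-heat-ST} after a further log-convexity interpolation in the target Lebesgue exponent, and then optimize in $T$ and the auxiliary parameters. Concretely, I start from
\begin{equation*}
|\nabla|^s f \, = \, \frac{1}{\Gamma(\sigma)}\int_0^\infty t^{\sigma-1} \, |\nabla|^{s+2\sigma-s_j} e^{t\Delta} |\nabla|^{s_j} f \, dt \, , \qquad j\in\{1,2\} \, ,
\end{equation*}
take $L^p$ norms through Minkowski's integral inequality, and split $\int_0^\infty = \int_0^T + \int_T^\infty$. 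The asymmetry in \eqref{ass-para} forces the choice $j=2$ on $(0,T]$ (integrability near $0$) and $j=1$ on $[T,\infty)$ (integrability near $\infty$), and the interpolation identity \eqref{def-theta} is precisely what makes these two regimes complementary.

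On each piece I gain an extra degree of freedom via log-convexity of Lebesgue norms. Writing $1/p = (1-\beta_2)/r_2 + \beta_2/q_2 = \beta_1/r_1 + (1-\beta_1)/q_1$, one has $\|g\|_p \leq \|g\|_{r_2}^{1-\beta_2}\|g\|_{q_2}^{\beta_2}$ on $(0,T]$ and the analogous bound with weights $(\beta_1, 1-\beta_1)$ on $[T,\infty)$. I then apply Theorem \ref{thm-heat-ST} to each of the four resulting quantities, pairing the $r_j$-factor with source $L^{p_j}$ and the $q_j$-factor with source $L^{p_{3-j}}$. This produces exactly the four parabolic constants visible in $\GNS$. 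Using \eqref{def-theta} together with the Hölder identity for $1/p$, a direct calculation collapses the combined $t$-exponent on each piece to $-1 + \lambda(\theta - \beta_j)$, where $\lambda := \tfrac{d}{2}(1/p_1 - 1/p_2 - (s_1-s_2)/d) > 0$; the sandwich $\beta_2 < \theta < \beta_1$ imposed in $\Sigma$ is exactly the positivity/negativity needed for integrability on the two halves.

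Optimizing in $T$ (equivalently, invoking a two-term Young inequality with conjugate exponents $(\beta_1-\beta_2)/(\beta_1-\theta)$ and $(\beta_1-\beta_2)/(\theta-\beta_2)$) eliminates $T$ and reassembles the two pieces into the claimed product of parenthesized factors in $\GNS$. The function-norm exponents reduce automatically to $\theta$ and $1-\theta$ via algebraic identities of the type $\beta_2(\beta_1-\theta)+\beta_1(\theta-\beta_2) = \theta(\beta_1-\beta_2)$. Taking the infimum over $(\beta_1,\beta_2,r_1,r_2,q_1,q_2,\sigma)\in\Sigma$ then produces $\GNS(p,p_1,p_2,s,s_1,s_2,d)$.

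The main obstacle I anticipate is not the algebra itself but the verification that the interval constraints defining $\Sigma$ in \eqref{def-Sigma} coincide exactly with the joint conditions required for the above scheme to close: (i) each log-convexity decomposition is well-defined, i.e., $1/p$ lies between $1/r_j$ and $1/q_j$ with $r_j,q_j \in [1,\infty]$; (ii) each of the four invocations of Theorem \ref{thm-heat-ST} falls in its permissible range, in particular the sign condition $-s' < d(1/p_j - 1/\rho)$ whenever $s' := s+2\sigma - s_j < 0$, which is where both the two-sided bound on $\beta_j/r_j$ and the lower bound on $\sigma$ enter; and (iii) the non-degeneracy $\beta_2 < \theta < \beta_1$. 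Unpacking this equivalence is where the real work lies; the remaining steps are routine bookkeeping.
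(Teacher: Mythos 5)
Your proposal follows the paper's own proof essentially step for step: the heat-kernel representation of $|\nabla|^{-2\sigma}$ combined with Minkowski, the split of the $t$-integral at a scale $t_0=T$, the H\"older/log-convexity split of the target norm with weights $(\beta_2,1-\beta_2)$ on the small-$t$ piece and $(\beta_1,1-\beta_1)$ on the large-$t$ piece, the same four pairings of parabolic constants with sources $p_1,p_2$, the same collapsed $t$-exponents $-1+\lambda(\theta-\beta_j)$, and the same optimization in $T$ followed by the infimum over $\Sigma$. The constraint verification you defer as ``the real work'' is precisely the short check the paper performs (non-emptiness of the $r_j$-intervals under \eqref{ass-para}, \eqref{ass-sigma}, $\beta_2<\theta<\beta_1$, and the induced bounds on $q_j$ via \eqref{def-qj}), so nothing essential is missing.
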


        \begin{remark}
            The constants we obtain here are not sharp. Instead, Theorem \ref{thm-GNS} provides orders of magnitude for the GNS constants, which can be useful, e.g., in a priori estimates. Our estimates do not allow for determining the endpoint estimates $\theta=0,1$, which would correspond to Sobolev inequalities, see Remark \ref{rem-sob}. As explained above, the endpoint cases exhibit peculiar behavior, requiring a deeper analysis with better estimates than those presented here.
        \end{remark}

    \section{Ideas of the proof}

    \subsection{Interpolation via heat map} 
    Let us assume the validity of Theorem \ref{thm-heat-ST}. Our goal is to prove Theorem \ref{thm-GNS}. 
    \par Let $s,s_1,s_2\in \R$, and $p,p_1,p_2 \in [1,\infty]$ satisfy \eqref{ass-para}, $\theta\in(0,1)$ be given by \eqref{def-theta}, and assume $f\in C^\infty_c(\R^d)$. The starting point for our approach is the representation of the inverse Laplacian via the heat-map. Let $\sigma>0$. Then we have 
        \begin{align}\label{id-inv-lapl}
            |\nabla|^{-2\sigma} f(x) \, = \, \frac1{\Gamma(\sigma)}\int_0^\infty dt \, t^{\sigma-1} e^{t\Delta} f (x) \, ,
        \end{align}
    see, \cite{balakrishnan60,kato-frac-power-61,komatsu66,stinga-phd,stinga-handbook-2019,stein-sing-diff-70,Yosida-FuncAna}, and also \cite[Corollary 15.20]{kunstmannweis-maximal}. Fix some $t_0>0$ to be determined below. Minkowski's inequality then yields
        \begin{align}    \begin{split}
            \MoveEqLeft\||\nabla|^s f\|_p \, = \, \||\nabla|^{-2\sigma}|\nabla|^{s+2\sigma} f\|_p\\
            \leq & \, \frac1{\Gamma(\sigma)}\Big(\int_0^{t_0} dt \, t^{\sigma-1} \||\nabla|^{s+2\sigma}e^{t\Delta} f \|_p \,+ \, \int_{t_0}^\infty dt \, t^{\sigma-1} \||\nabla|^{s+2\sigma}e^{t\Delta} f\|_p\Big) \, .
        \end{split}
    \end{align} 
    Next, we apply H\"older to obtain
    \begin{align} 
        \begin{split}
            \MoveEqLeft\||\nabla|^s f\|_p \\
            \leq & \, \frac1{\Gamma(\sigma)}\Big(\int_0^{t_0} dt \, t^{\sigma-1} \||\nabla|^{s+2\sigma}e^{t\Delta} f \|_{q_2}^{\beta_2}\||\nabla|^{s+2\sigma}e^{t\Delta} f \|_{r_2}^{1-\beta_2} \\
            & + \, \int_{t_0}^\infty dt \, t^{\sigma-1} \||\nabla|^{s+2\sigma}e^{t\Delta} f \|_{r_1}^{\beta_1}\||\nabla|^{s+2\sigma}e^{t\Delta} f \|_{q_1}^{1-\beta_1}\Big)
        \end{split}
    \end{align}
    for appropriate choices of $\beta_j$, $r_j$, $q_j$ satisfying
    \begin{equation}
        \frac1p \, = \, \frac{\beta_1}{r_1} \, + \, \frac{1-\beta_1}{q_1} \, = \, \frac{1-\beta_2}{r_2} \, + \, \frac{\beta_2}{q_2} \, .
    \end{equation}
    At this stage, we need to restrict to $(\beta_1,\beta_2,r_1,r_2,q_1,q_2,\sigma)\in\Sigma$ in order to employ Theorem \ref{thm-heat-ST}. For $\Sigma$ to be non-empty, we recover \eqref{ass-para}. Then we to obtain 
    \begin{align} 
        \begin{split}
            \MoveEqLeft\||\nabla|^s f\|_p \\
            \leq & \, C_1\||\nabla|^{s_1}f\|_{p_1}^{\beta_2}\||\nabla|^{s_2}f\|_{p_2}^{1-\beta_2} \, \int_0^{t_0} dt \, t^{-1+(\theta-\beta_2)\frac{d}2(\frac1{p_1}-\frac1{p_2}-\frac{s_1-s_2}d)} \\
            & \, + \, C_2\||\nabla|^{s_1}f\|_{p_1}^{\beta_1}\||\nabla|^{s_2}f\|_{p_2}^{1-\beta_1} \int_{t_0}^\infty dt \, t^{-1-(\beta_1-\theta)\frac{d}2(\frac1{p_1}-\frac1{p_2}-\frac{s_1-s_2}d)} \, .
        \end{split}
    \end{align}
    These integrals are finite due to \eqref{ass-para}. Then we find that
    \begin{align} 
        \begin{split}
            \||\nabla|^s f\|_p 
            \leq & \, C_1\||\nabla|^{s_1}f\|_{p_1}^{\beta_2}\||\nabla|^{s_2}f\|_{p_2}^{1-\beta_2} \, t_0^{(\theta-\beta_2)\frac{d}2(\frac1{p_1}-\frac1{p_2}-\frac{s_1-s_2}d)} \\
            & \, + \, C_2\||\nabla|^{s_1}f\|_{p_1}^{\beta_1}\||\nabla|^{s_2}f\|_{p_2}^{1-\beta_1} t_0^{-(\beta_1-\theta)\frac{d}2(\frac1{p_1}-\frac1{p_2}-\frac{s_1-s_2}d)} \, .
        \end{split}
    \end{align}
    Then Theorem \ref{thm-GNS} follows from first equating the coefficients by choosing $t_0$ appropriately, and then minimizing w.r.t. to the free parameters $\beta_j,r_j,\sigma$.
    
    \subsection{Quantitative parabolic estimates}

    We now turn to establishing an upper bound on the constants $\heat(p,r,s,d)$ in Theorem \ref{thm-heat-ST}. The ideas presented here are standard, and are only repeated to obtain explicit upper bounds. 
    \par Recall that the heat kernel is given by
    \begin{equation}\label{def-heat-kernel}
        e^{t\Delta}(x,y) \, = \, G_t(x-y) \, := \, (4\pi t)^{-\frac{d}2} e^{-\frac{|x-y|^2}{4t}} \, .
    \end{equation}
    Our strategy is to establish the following inequalities
    \begin{align}
        \|(-\Delta)^n e^{t\Delta}f\|_p \, &\leq\, A_1(p,r,n,d)t^{-n-\frac{d}2(\frac1r-\frac1p)}\|f\|_r \, , \label{eq-dif-smoothing-form}\\
        \||\nabla|^{n+2\alpha} e^{t\Delta}f\|_p \, &\leq \, A_2(p,r,n,\alpha,d) t^{-\frac{n+2\alpha}2-\frac{d}2(\frac1r-\frac1p)}\|f\|_r  \, , \label{eq-frac-smoothing-form}\\
        \||\nabla|^{-s} e^{t\Delta}f\|_p \, &\leq \, A_3(p,r,s,d)t^{\frac{s}2-\frac{d}2(\frac1r-\frac1p)} \|f\|_r \, , \label{eq-inv-smoothing-form}
    \end{align}
    where $1\leq r\leq p \leq \infty$, $\alpha\in(0,1)$, $n\in\N_0$, $s>0$, $t>0$, and, in \eqref{eq-inv-smoothing-form}, 
    \begin{equation}
        \frac1r \, > \, \frac1p+\frac{s}d \, .
    \end{equation} 
    Theorem \ref{thm-heat-ST} then follows from \eqref{eq-dif-smoothing-form}--\eqref{eq-inv-smoothing-form}.

    \section{Proof of Theorem \ref{thm-heat-ST}}
    
    \begin{lemma}[Proof of \eqref{eq-dif-smoothing-form}]\label{lem-int-smoothing}
        Let $f\in C^\infty_c(\R^d)$, $1\leq r \leq p \leq \infty$, $t>0$. Let $q$ be given by
        \begin{equation}
            \frac1q+\frac1r=1+\frac1p \, .
        \end{equation}
        Then we have that
        \begin{equation}
            \|e^{t\Delta}f\|_p \, \leq\, \frac{\young(p,q,r,d)}{(4\pi)^{\frac{d}2(\frac1r-\frac1p)}q^{\frac{d}{2q}}}t^{-\frac{d}2(\frac1r-\frac1p)}\|f\|_r \, .
        \end{equation}
        In addition, we have that
        \begin{equation}
            \|G_t\|_q \, = \, \frac1{(4\pi t)^{\frac{d}{2q'}}q^{\frac{d}{2q}}} \, .
        \end{equation}
    \end{lemma}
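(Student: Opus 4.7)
My plan is to prove both claims by direct computation from the explicit form of the heat kernel \eqref{def-heat-kernel}, combined with the sharp Young convolution inequality \eqref{eq-young}. The Young exponent condition $1/q+1/r=1+1/p$ is precisely the setup needed to view $e^{t\Delta}f=G_t*f$ and apply \eqref{eq-young}.

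First, I would compute $\|G_t\|_q$ exactly. Since $G_t(x)=(4\pi t)^{-d/2}e^{-|x|^2/(4t)}$, the Gaussian integral gives
\begin{equation*}
\|G_t\|_q^q \,=\, (4\pi t)^{-dq/2}\int_{\R^d} e^{-q|x|^2/(4t)}\,dx \,=\, (4\pi t)^{-dq/2}\Big(\frac{4\pi t}{q}\Big)^{d/2} \,=\, (4\pi t)^{-d(q-1)/2}\,q^{-d/2}.
\end{equation*}
Taking the $q$-th root and using $(q-1)/q=1/q'$ yields exactly $\|G_t\|_q=(4\pi t)^{-d/(2q')}q^{-d/(2q)}$, which is the second claim (interpreted via $\infty^0:=1$ in the degenerate case $q=1$, i.e.\ $r=p$ in the Young relation, which is consistent with $1/q'=0$).

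Second, I would apply \eqref{eq-young} in the form $\|G_t*f\|_p\le \young(p,q,r,d)\|G_t\|_q\|f\|_r$ and substitute the expression for $\|G_t\|_q$. Using the Young identity $1/q'=1-1/q=1/r-1/p$, one gets
\begin{equation*}
\|e^{t\Delta}f\|_p \,\le\, \young(p,q,r,d)\,(4\pi t)^{-\frac{d}{2}(\frac{1}{r}-\frac{1}{p})}\,q^{-\frac{d}{2q}}\|f\|_r,
\end{equation*}
which is the first claim after pulling $t^{-(d/2)(1/r-1/p)}$ out of the prefactor.

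The only borderline issue is the interpretation of the exponents at the endpoints $p,q,r\in\{1,\infty\}$: the convention $\infty^0=1$ fixed at the beginning of the preliminaries, together with the definition \eqref{def-young-const} (which sets $\young=1$ at the endpoints) and the endpoint validity of Young's inequality cited right after \eqref{eq-young}, handles each boundary case uniformly, so no separate argument is needed. There is no substantive obstacle: the lemma is a bookkeeping exercise that records the sharp Beckner--Brascamp--Lieb constant together with the exact Gaussian $L^q$-norm in a form that the later integration in $t$ (for fractional and inverse powers of $|\nabla|$) can use directly.
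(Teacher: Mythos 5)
Your proof is correct and follows essentially the same route as the paper: compute $\|G_t\|_q$ by the Gaussian integral, then apply the sharp Young inequality \eqref{eq-young} and use $1/q'=1/r-1/p$. One tiny slip worth noting: the convention $\infty^0:=1$ is actually needed for the factor $q^{d/(2q)}$ in the endpoint $q=\infty$ (equivalently $r=1$, $p=\infty$), not $q=1$; in the $q=1$ case the formula reduces to $\|G_t\|_1=1$ without any convention.
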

    \begin{proof}
        Using Young's inequality \eqref{eq-young}, we have that
        \begin{equation} \label{eq-heat-young-0}
            \|e^{t\Delta}f\|_p \, \leq \, \young(p,q,r,d) \|G_t\|_q\|f\|_r \, ,
        \end{equation}
        where $\frac1q+\frac1r=1+\frac1p$, and
        Using \eqref{def-heat-kernel}, we have that
        \begin{align} \label{eq-heat-Lq-norm}
            \begin{split}
            \|G_t\|_q \, &= \, \frac1{(4\pi t)^{d/2}}\Big(\int dx \, e^{-\frac{q|x|^2}{4t}}\Big)^{\frac1q}\\
            &= \, \frac1{(4\pi t)^{\frac{d}{2q'}}q^{\frac{d}{2q}}} \, , 
            \end{split}
        \end{align}
        whenever $q\in[1,\infty)$. In addition, we have the uniform estimate
        \begin{equation}\label{eq-heat-Linfty-norm}
            \|G_t\|_\infty \, = \, \frac1{(4\pi t)^{\frac{d}2}} \, ,
        \end{equation}
        such that, by recalling that $\infty^{1/\infty}=1$ by convention, we obtain 
        \begin{equation} \label{eq-heat-all-Lq-norm}
            \|G_t\|_q \, = \, \frac1{(4\pi t)^{\frac{d}{2q'}}q^{\frac{d}{2q}}} 
        \end{equation}
        for all $q\in[1,\infty]$. Combining \eqref{eq-heat-all-Lq-norm} with \eqref{eq-heat-young-0} and \eqref{def-young-const}, we finish the proof.
    \end{proof}
    
    \begin{lemma}\label{lem-dif-smoothing-form}
        Let $f\in C^\infty_c(\R^d)$, $1\leq p \leq \infty$, $n\in\N$, and $t>0$. Let $\Gamma$ denote the Gamma function. Then we have that
        \begin{equation}\label{eq-diff-smoothing-lem}
            \|(-\Delta)^n e^{t\Delta}f\|_p \, \leq\, \frac{\Gamma(\frac{d}2+n)2^n}{\Gamma(\frac{d}2)} t^{-n}\|f\|_p \, .
        \end{equation}
        Using Lemma \ref{lem-int-smoothing}, we can extend \ref{eq-diff-smoothing-lem} to include $n=0$.
    \end{lemma}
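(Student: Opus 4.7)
The plan is: via Young's inequality, reduce to estimating $\|(-\Delta)^n G_t\|_1$, and then compute that $L^1$ norm by identifying the kernel in closed form in terms of generalized Laguerre polynomials.

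First, applying Young's inequality \eqref{eq-young} with exponents $(p,1,p)$ -- for which $\young(p,1,p,d)=1$ by the endpoint clause of \eqref{def-young-const} -- we obtain $\|(-\Delta)^n e^{t\Delta} f\|_p \leq \|(-\Delta)^n G_t\|_1 \|f\|_p$, so the task reduces to showing
\begin{equation*}
	\|(-\Delta)^n G_t\|_1 \,\leq\, \frac{\Gamma(d/2+n)\, 2^n}{\Gamma(d/2)}\, t^{-n} .
\end{equation*}

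The key step is to derive the closed-form expression
\begin{equation*}
	(-\Delta_x)^n G_t(x) \,=\, \frac{n!}{t^n}\, L_n^{(d/2-1)}\!\left(\frac{|x|^2}{4t}\right) G_t(x),
\end{equation*}
where $L_n^{(\alpha)}$ denotes the generalized Laguerre polynomial. Two Gaussian convolutions yield $e^{-\tau\Delta}[e^{-|u|^2}](u) = (1-4\tau)^{-d/2} e^{-|u|^2/(1-4\tau)}$; multiplying through by $e^{|u|^2}$ and matching the resulting series against the Laguerre generating function $\sum_n L_n^{(\alpha)}(x) z^n = (1-z)^{-\alpha-1} e^{-xz/(1-z)}$ (with $\alpha = d/2-1$, $z=4\tau$, $x=|u|^2$), comparison of coefficients gives $(-\Delta)^n e^{-|u|^2} = 4^n n!\, L_n^{(d/2-1)}(|u|^2)\, e^{-|u|^2}$. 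The parabolic rescaling $u = x/(2\sqrt{t})$ then delivers the displayed kernel formula.

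Once that is in place, I would pass to radial coordinates, substitute $v = |x|^2/(4t)$ (using $\omega_{d-1} = 2\pi^{d/2}/\Gamma(d/2)$), apply the triangle inequality to the explicit expansion $L_n^{(\alpha)}(v) = \sum_{k=0}^n (-1)^k \binom{n+\alpha}{n-k} v^k/k!$ -- whose binomial coefficients are non-negative since $\alpha = d/2-1 > -1$ -- and integrate term by term. Using $\int_0^\infty v^{k+d/2-1} e^{-v}\,dv = \Gamma(k+d/2)$ together with the Gamma-quotient identity $\binom{n+d/2-1}{n-k}\,\Gamma(k+d/2)/k! = \binom{n}{k}\,\Gamma(n+d/2)/n!$ and the binomial theorem $\sum_k \binom{n}{k} = 2^n$, the sum collapses to $\Gamma(n+d/2)\, 2^n / n!$; combining with the prefactor $n!/(t^n\, \Gamma(d/2))$ coming from the radial measure gives precisely the claimed bound.

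The main obstacle is producing the Laguerre representation cleanly; all other ingredients (Young, the radial change of variables, and the combinatorial identity) are routine.
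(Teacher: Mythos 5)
Your proof is correct and reaches the identical bound by a genuinely different, and arguably cleaner, route. Both arguments begin the same way (Young with exponents $(p,1,p)$ reduces the problem to $\|(-\Delta)^n G_t\|_1$), but the paper then exploits the heat equation to convert $(-\Delta)^n G_t$ into $(-\partial_t)^n G_t$, expands via Leibniz and Fa\`a di Bruno, bounds the resulting Bell-polynomial coefficients term by term, and re-sums using the Bell generating function $\exp\bigl(\lambda\sigma/(1-\lambda)\bigr)$ together with the integral representation of $\Gamma$. You instead identify $(-\Delta)^n e^{-|u|^2}=4^n n!\,L_n^{(d/2-1)}(|u|^2)e^{-|u|^2}$ outright from the Laguerre generating function, which is in fact the same combinatorics in closed form (the Bell generating function appearing in the paper is precisely the exponential part of the Laguerre one), and then a short radial integration with the nonnegativity of $\binom{n+d/2-1}{n-k}$, the Gamma-quotient identity $\binom{n+d/2-1}{n-k}\,\Gamma(k+d/2)/k!=\binom{n}{k}\,\Gamma(n+d/2)/n!$, and $\sum_k\binom{n}{k}=2^n$ collapses the bound. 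The one place to tighten the write-up: the identity $e^{-\tau\Delta}[e^{-|u|^2}]=(1-4\tau)^{-d/2}e^{-|u|^2/(1-4\tau)}$ is a backward-heat formula, so you should either first compute $e^{\tau\Delta}[e^{-|u|^2}]=(1+4\tau)^{-d/2}e^{-|u|^2/(1+4\tau)}$ by Gaussian convolution and analytically continue in $\tau$, or observe that the coefficient-matching is a purely formal Taylor-series statement in $\tau$ that is unambiguous because both sides of the generating identity are analytic near $\tau=0$; as written, \enquote{two Gaussian convolutions} does not quite say this. With that clarification your argument is complete and yields exactly the constant $\Gamma(\tfrac{d}{2}+n)2^n/\Gamma(\tfrac{d}{2})$.
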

    \begin{proof}
        We proceed as in the proof of Lemma \ref{lem-int-smoothing} by employing Young's inequality to obtain
        \begin{equation}
            \|(-\Delta)^n e^{t\Delta}f\|_p \, \leq \, \|(-\Delta)^n G_t\|_1 \|f\|_p \, .
        \end{equation}
        We are thus tasked with bounding $\|(-\Delta)^n G_t\|_1$. 
        \par We start with the observation that
        \begin{equation} \label{eq-heat-space-time-equiv}
            (-\Delta)^ne^{t\Delta} f \, = \, \big((-\partial_t)^n G_t\big)*f \, ,
        \end{equation}
        and that
        \begin{align} \label{eq-partialt-heat-1}
            \begin{split}
                \Big((-\partial_t)^nG_t(x)\Big) \, =& \, (-\partial_t)^n\Big((4\pi t)^{-\frac{d}2} e^{-\frac{|x|^2}{4t}}\Big) \\
                =& \, \frac{(-1)^n}{(4\pi)^{\frac{d}2}}\sum_{k=0}^n\binom{n}{k} \frac{\Gamma(\frac{d}2+k)}{\Gamma(\frac{d}2)}(-1)^kt^{-\frac{d}2-k}\partial_t^{n-k}\big(e^{-\frac{|x|^2}{4t}}\big) \, ,
            \end{split}
        \end{align}
        where we used the Leibniz formula in the second step. Let 
        \begin{equation} \label{def-R-fdb}
            R(\ell) \, := \, \Big\{\vec{r}_\ell\in \N_0^\ell \mid \sum_{j=1}^\ell jr_j \, = \, \ell \Big\}
        \end{equation}
        Using the Fa\`a di Bruno formula, we obtain that
        \begin{align} \label{eq-heat-fdb}
            \begin{split}
            \MoveEqLeft\partial_t^{n-k}\big(e^{-\frac{|x|^2}{4t}}\big) \\
            =& \, (n-k)!e^{-\frac{|x|^2}{4t}}\sum_{\substack{\vec{r}_{n-k}\in\\ R(n-k)}}\prod_{j=1}^{n-k}\frac1{r_j!}\Big[\frac1{j!}\partial_t^j\Big(-\frac{|x|^2}{4t}\Big)\Big]^{r_j}\\
            =& \, (n-k)!e^{-\frac{|x|^2}{4t}}\sum_{\substack{\vec{r}_{n-k}\in\\ R(n-k)}}\Big(-\frac{|x|^2}4\Big)^{|\vec{r}_{n-k}|_1}\\
            & \, \prod_{j=1}^{n-k}\frac1{r_j!}\Big(\frac{(-1)^j}{t^{j+1}}\Big)^{r_j} \\
            =& \, \Big(\frac{-1}{t}\Big)^{n-k}(n-k)!e^{-\frac{|x|^2}{4t}}\sum_{\substack{\vec{r}_{n-k}\in\\ R(n-k)}}\Big(-\frac{|x|^2}{4t}\Big)^{|\vec{r}_{n-k}|_1}\prod_{j=1}^{n-k}\frac1{r_j!} \, ,
            \end{split}
        \end{align}
        where we employed the summation condition in \eqref{def-R-fdb} and where 
        \begin{equation}
            |\vec{r}_{n-k}|_1:=\sum_{j=1}^{n-k}r_j
        \end{equation} 
        denotes the $\ell^1$-norm. Recalling \eqref{eq-heat-space-time-equiv}, and plugging 
        \eqref{eq-heat-fdb} into \eqref{eq-partialt-heat-1} yields
        \begin{align}
            \begin{split}
                \MoveEqLeft\Big\|(-\Delta)^nG_t\Big\|_1 \\
                \leq & \, \frac{n!}{(4\pi)^{\frac{d}2}t^{\frac{d}2+n}}\sum_{k=0}^n\frac{\Gamma(\frac{d}2+k)}{k!\Gamma(\frac{d}2)}\sum_{\substack{\vec{r}_{n-k}\in\\ R(n-k)}}\int dx \, \Big(\frac{|x|^2}{4t}\Big)^{|\vec{r}_{n-k}|_1} e^{-\frac{|x|^2}{4t}}\prod_{j=1}^{n-k}\frac1{r_j!} \, .
            \end{split}
        \end{align}
        Substituting $y:=x/(2\sqrt{t})$, we can simplify the expression as
        \begin{align} \label{eq-partialt-heat-L1-bd-1}
            \begin{split}
                \MoveEqLeft\Big\|(-\Delta)^nG_t\Big\|_1 \\
                \leq & \, \frac{n!}{(\pi)^{\frac{d}2}t^n}\sum_{k=0}^n\frac{\Gamma(\frac{d}2+k)}{k!\Gamma(\frac{d}2)}\sum_{\substack{\vec{r}_{n-k}\in\\ R(n-k)}}\int dy \, |y|^{2|\vec{r}_{n-k}|_1} e^{-|y|^2}\prod_{j=1}^{n-k}\frac1{r_j!} \, .
            \end{split}
        \end{align}
        Recall that the Gaussian satisfies
        \begin{align}
        \begin{split}
            \int dy \, |y|^{2\ell} e^{-|y|^2} \, &= \, (-\partial_\lambda)^\ell\Big|_{\lambda=1}\int dy \, e^{-\lambda|y|^2}\\
            &= \, (-\partial_\lambda)^\ell\Big|_{\lambda=1}\Big(\frac{\pi}{\lambda}\Big)^{\frac{d}2} \\
            &= \, \pi^{\frac{d}2}\frac{\Gamma(\frac{d}2+\ell)}{\Gamma(\frac{d}2)} \, .
        \end{split}
        \end{align}
        Abbreviating again with Gamma functions, \eqref{eq-partialt-heat-L1-bd-1} thus implies
        \begin{align} \label{eq-partialt-heat-L1-bd-2}
            \begin{split}
                \MoveEqLeft\Big\|(-\Delta)^nG_t\Big\|_1 \\
                \leq & \, \frac{n!}{t^n}\sum_{k=0}^n\frac{\Gamma(\frac{d}2+k)}{k!\Gamma(\frac{d}2)^2}\sum_{\substack{\vec{r}_{n-k}\in\\ R(n-k)}}\Gamma(\frac{d}2+|\vec{r}_{n-k}|_1)\prod_{j=1}^{n-k}\frac1{r_j!} \, .
            \end{split}
        \end{align}
        Recall the integral identity 
        \begin{equation}\label{eq-gamma-int-id}
            \Gamma(z) \, = \, \int_0^\infty d\sigma \, \sigma^{z-1}e^{-\sigma}
        \end{equation}
        for all $\Re(z)>0$. In order to evaluate \eqref{eq-partialt-heat-L1-bd-2}, we start by computing
        \begin{align}\label{eq-fdb-part-0}
            \begin{split}
                \sum_{\substack{\vec{r}_{n-k}\in\\ R(n-k)}}\Gamma\Big(\frac{d}2+|\vec{r}_{n-k}|_1\Big)\prod_{j=1}^{n-k}\frac1{r_j!}
                \, &=  \, \int_0^\infty d\sigma \, e^{-\sigma}\sum_{\substack{\vec{r}_{n-k}\in\\ R(n-k)}}\sigma^{d/2-1+\sum_{j=1}^{n-k}r_j} \prod_{j=1}^{n-k}\frac{1}{r_j!}\\
                &= \, \int_0^\infty d\sigma \, \sigma^{d/2-1} e^{-\sigma} \sum_{\substack{\vec{r}_{n-k}\in\\ R(n-k)}}\prod_{j=1}^{n-k}\frac{\sigma^{r_j}}{r_j!} \, .
            \end{split}
        \end{align}
        Given a real sequence $(x_j)_{j\in\N}\in\R^\N$, recall the definition of the (complete) exponential Bell polynomials
        \begin{equation}\label{def-Bell-polynomials}
            B_\ell(x_1,\ldots,x_\ell) \, := \, \ell! \sum_{\substack{\vec{r}_{\ell}\in\\ R(\ell)}}\prod_{j=1}^{\ell}\frac{1}{r_j!}\Big(\frac{x_j}{j!}\Big)^{r_j} \, .
        \end{equation}
        Define the generating function
        \begin{equation}\label{def-bell-gen-fcn}
            \Phi(\lambda,(x_j)_{j\in\N}) \, := \, \exp\Big(\sum_{j=1}^\infty\frac{x_j\lambda^j}{j!}\Big) \, . 
        \end{equation}
        Then the Fa\`a di Bruno formula yields that
        \begin{equation}\label{eq-bell-mgf-id}
            B_\ell(x_1,\ldots,x_\ell) \, = \, \partial_\lambda^\ell\Big|_{\lambda=0}\Phi(\lambda,(x_j)_{j\in\N}) \, .
        \end{equation}
        In order to calculate \eqref{eq-fdb-part-0}, we start by identifying 
        \begin{equation} \label{eq-bell-pol-id}
            \sum_{\substack{\vec{r}_{n-k}\in\\ R(n-k)}}\prod_{j=1}^{n-k}\frac{\sigma^{r_j}}{r_j!} \, = \, \frac1{(n-k)!}B_{n-k}(\sigma 1!,\ldots, \sigma n!) \, ,
        \end{equation}
        i.e., $x_j=\sigma j!$. Then the moment generating function is given by
        \begin{equation}\label{eq-mgf-1}
            \Phi(\lambda,(\sigma j!)_j) \, = \, \exp\Big(\sigma\sum_{j=1}^\infty \lambda^j\Big) \, = \, \exp\Big(\frac{\lambda \sigma}{1-\lambda}\Big) 
        \end{equation}
        for any $|\lambda|<1$. Collecting \eqref{eq-bell-pol-id}--\eqref{eq-mgf-1}, we hence obtain
        \begin{equation} \label{eq-fdb-part-2}
            \sum_{\substack{\vec{r}_{n-k}\in\\ R(n-k)}}\prod_{j=1}^{n-k}\frac{\sigma^{r_j}}{r_j!} \, = \, \frac1{(n-k)!}\partial_\lambda^{n-k}\Big|_{\lambda=0}\exp\Big(\frac{\lambda \sigma}{1-\lambda}\Big) \, .
        \end{equation}
        Plugging \eqref{eq-fdb-part-0} and \eqref{eq-fdb-part-2} into \eqref{eq-partialt-heat-L1-bd-2} implies
        \begin{align}\label{eq-partialt-heat-L1-bd-3}
            \begin{split}
                \MoveEqLeft\Big\|(-\Delta)^nG_t\Big\|_1 \\
                \leq & \, \frac{1}{t^n}\sum_{k=0}^n\binom{n}{k}\frac{\Gamma(\frac{d}2+k)}{\Gamma(\frac{d}2)^2}\int_0^\infty d\sigma \, \sigma^{d/2-1} e^{-\sigma}  \partial_\lambda^{n-k}\Big|_{\lambda=0}\exp\Big(\frac{\lambda \sigma}{1-\lambda}\Big) \, .
            \end{split}
        \end{align}
        Next, we employ the integral representation \eqref{eq-gamma-int-id} of $\Gamma$ again to obtain
        \begin{align}
            \begin{split}
                \MoveEqLeft\Big\|(-\Delta)^nG_t\Big\|_1 \\
                \leq & \, \frac{1}{t^n}\sum_{k=0}^n\binom{n}{k}\frac{1}{\Gamma(\frac{d}2)^2}\int_0^\infty d\sigma \, \sigma^{d/2-1} e^{-\sigma}  \partial_\lambda^{n-k}\Big|_{\lambda=0}\exp\Big(\frac{\lambda \sigma}{1-\lambda}\Big) \\
                & \, \int_0^\infty d\tau \, \tau^{\frac{d}2+k-1}e^{-\tau} \, .
            \end{split}
        \end{align}
        Using the Binomial Theorem, we can simplify this expression as
        \begin{align}\label{eq-partialt-heat-L1-bd-4}
            \begin{split}
                \MoveEqLeft\Big\|(-\Delta)^nG_t\Big\|_1 \\
                \leq & \, \frac{1}{t^n\Gamma(\frac{d}2)^2}\int_0^\infty d\sigma \,\sigma^{d/2-1} e^{-\sigma} \int_0^\infty d\tau \, \tau^{\frac{d}2-1}e^{-\tau}\\
                & \, (\tau+\partial_\lambda)^n\Big|_{\lambda=0}\exp\Big(\frac{\lambda \sigma}{1-\lambda}\Big)  \, .
            \end{split}
        \end{align}
        Using the fact that
        \begin{equation}
            (\tau+\partial_\lambda)^n\Big|_{\lambda=0}\exp\Big(\frac{\lambda \sigma}{1-\lambda}\Big) \, = \, \partial_\lambda^n\Big|_{\lambda=0}\exp\Big(\frac{\lambda \sigma}{1-\lambda}+\lambda \tau \Big) \, ,
        \end{equation}
        pulling $\partial_\lambda^n\Big|_{\lambda=0}$ out of the integrals, \eqref{eq-partialt-heat-L1-bd-4} implies 
        \begin{align}
            \begin{split}
                \MoveEqLeft\Big\|(-\Delta)^nG_t\Big\|_1 \\
                \leq & \, \frac{1}{t^n\Gamma(\frac{d}2)^2}\partial_\lambda^n\Big|_{\lambda=0}\Big[\int_0^\infty d\sigma \,\sigma^{d/2-1} \exp\Big(-\frac{1-2\lambda}{1-\lambda}\sigma\Big) \\
                & \, \int_0^\infty d\tau \, \tau^{\frac{d}2-1}e^{-(1-\lambda)\tau}\Big] \, .
            \end{split}
        \end{align}
        Substituting $\sigma\to (1-\lambda)/(1-2\lambda)\sigma$ and $\tau\to 1/(1-\lambda) \tau$, and recalling \eqref{eq-gamma-int-id}, we obtain
        \begin{align}
            \begin{split}
                \MoveEqLeft\Big\|(-\Delta)^nG_t\Big\|_1 \\
                \leq & \, \frac{1}{t^n}\partial_\lambda^n\Big|_{\lambda=0}\Big[\Big(\frac{1-\lambda}{1-2\lambda}\Big)^{\frac{d}2}\frac1{(1-\lambda)^{\frac{d}2}}\Big] \\
                =& \, \frac{\Gamma(\frac{d}2+n)2^n}{t^n\Gamma(\frac{d}2)} \, .
            \end{split}
        \end{align}
        This concludes the proof.
    \end{proof}

    \begin{proposition}[Proof of \eqref{eq-dif-smoothing-form}]\label{prop-par-est-int}
        Assume $f\in C^\infty_c(\R^d)$, $s\in 2\N_0$, and $1\leq r\leq p\leq \infty$. Let $q\in[1,\infty]$ be given by
        \begin{equation}    \frac1q+\frac1r \, = \, 1 +\frac1p \, .
        \end{equation}
        If $r<p$, we have
        \begin{align} \label{eq-par-est-int}
            \begin{split}
                \MoveEqLeft \||\nabla|^s e^{t\Delta}f\|_p \\
                \leq & \, \frac{\young(p,q,r,d)2^s}{(4\pi)^{\frac{d}2(\frac1r-\frac1p)}q^{\frac{d}{2q}}(q')^{\frac{d}{2q'}}}\frac{\Gamma(\frac{d+s}2)}{\Gamma(\frac{d}2)s^{\frac{s}2}} \Big[\frac{s}2+\frac{d}2\Big(\frac1r-\frac1p\Big)\Big]^{\frac{s}2+\frac{d}2(\frac1r-\frac1p)}\\
                & \, t^{-\frac{s}2-\frac{d}2(\frac1r-\frac1p)}\|f\|_r \, ,
            \end{split}
        \end{align}
        and if $r=p$, we have
        \begin{align}
            \begin{split}
                \MoveEqLeft \|(-\Delta)^{\frac{s}2} e^{t\Delta}f\|_p \\
                \leq & \, \young(p,1,p,d)\frac{\Gamma(\frac{d+s}2)2^{\frac{s}2}}{\Gamma(\frac{d}2)}  t^{-\frac{s}2}\|f\|_p \, .
            \end{split}
        \end{align}
        $r=p$ coincides with the the limit $\lim_{r\uparrow p}$ in \eqref{eq-par-est-int}.
    \end{proposition}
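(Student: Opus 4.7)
The plan is to combine the semigroup identity $e^{t\Delta} = e^{t_1\Delta}e^{t_2\Delta}$, for an arbitrary decomposition $t = t_1 + t_2$ with $t_1, t_2 > 0$, with Lemmas \ref{lem-int-smoothing} and \ref{lem-dif-smoothing-form}: the former exchanges Lebesgue exponent ($L^r \to L^p$ with decay $t_2^{-d(1/r-1/p)/2}$), the latter absorbs the even-order differential operator $(-\Delta)^n = |\nabla|^{2n}$ ($L^p \to L^p$ with decay $t_1^{-n}$), and the split is then optimized. The hypothesis $s \in 2\N_0$ is essential, as it allows writing $|\nabla|^s = (-\Delta)^{s/2}$ so that Lemma \ref{lem-dif-smoothing-form} applies directly.

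For the case $r < p$, setting $n = s/2$ and chaining the two lemmas yields
\begin{equation*}
\|(-\Delta)^n e^{t\Delta}f\|_p \, \leq \, \|(-\Delta)^n e^{t_1\Delta}\|_{p\to p}\,\|e^{t_2\Delta}\|_{r\to p}\,\|f\|_r \, ,
\end{equation*}
and hence
\begin{equation*}
\|(-\Delta)^n e^{t\Delta}f\|_p \, \leq \, \frac{\Gamma(d/2+n)\,2^n}{\Gamma(d/2)}\cdot \frac{\young(p,q,r,d)}{(4\pi)^{d/(2q')}q^{d/(2q)}}\, t_1^{-n}t_2^{-d/(2q')}\|f\|_r \, ,
\end{equation*}
where $q$ is the Young exponent given by $1/q = 1 + 1/p - 1/r$, so that $d/(2q') = d(1/r-1/p)/2$. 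A one-variable calculus computation gives
\begin{equation*}
\inf_{t_1 + t_2 = t}\, t_1^{-n}t_2^{-d/(2q')} \, = \, \frac{(n+d/(2q'))^{n+d/(2q')}}{n^n(d/(2q'))^{d/(2q')}}\, t^{-n-d/(2q')} \, ,
\end{equation*}
attained at $t_1^* = n\,t/(n+d/(2q'))$. Substituting $n = s/2$ and using the identities $(s/2)^{s/2} = s^{s/2}/2^{s/2}$, $2^n\cdot 2^{s/2} = 2^s$, and $\Gamma(d/2 + s/2) = \Gamma((d+s)/2)$ collapses the result into the form \eqref{eq-par-est-int}.

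The endpoint $r = p$ corresponds to $q = 1$, so $\young(p,1,p,d) = 1$ and the factor $t_2^{-d/(2q')}$ reduces to $1$; no splitting is required, and Lemma \ref{lem-dif-smoothing-form} applied directly with $t_1 = t$ gives the claimed bound. Agreement with the limit $r \uparrow p$ in \eqref{eq-par-est-int} is a routine limit check: as $r \uparrow p$, $q \downarrow 1$ and $q' \to \infty$, forcing $q^{d/(2q)} \to 1$, $(q')^{d/(2q')} \to \infty^0 = 1$, $(4\pi)^{d(1/r-1/p)/2} \to 1$, and $[s/2 + d(1/r-1/p)/2]^{s/2+d(1/r-1/p)/2}/s^{s/2} \to 2^{-s/2}$, which combined with the $2^s$ prefactor reproduces the $2^{s/2}$ in the $r = p$ constant.

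The strategy is conceptually classical (parabolic regularization by time splitting followed by optimization of a power-product), so the main obstacle is not strategic but purely algebraic: carefully assembling the Young constant $\young(p,q,r,d)$, the gamma factors from Lemma \ref{lem-dif-smoothing-form}, the heat-kernel normalization from Lemma \ref{lem-int-smoothing}, and the minimization output into the single explicit constant stated in the proposition.
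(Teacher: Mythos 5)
Your argument is the paper's proof: splitting $t=t_1+t_2$ and optimizing over the split is exactly the paper's parametrization $t_1=(1-\lambda)t$, $t_2=\lambda t$ followed by Lemma \ref{lem-inv-pow-min}, with the same use of Lemmas \ref{lem-dif-smoothing-form} and \ref{lem-int-smoothing} for the two factors and the same treatment of the endpoint $r=p$ (minimizer at $\lambda=0$, i.e.\ no splitting, with $\young(p,1,p,d)=1$). Your closing limit check as $r\uparrow p$ is a small addition the paper only asserts, and your final algebraic collapse into the stated constant is the same rewriting the paper performs in passing from \eqref{eq-heat-ST-s=0-proof-1} to \eqref{eq-heat-ST-s=0-proof-2}.
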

    \begin{proof}
        Fix $\lambda\in[0,1]$, which we will determine below. Applying Lemma \ref{lem-dif-smoothing-form} yields
    \begin{align}
        \begin{split}
            \|(-\Delta)^{\frac{s}2} e^{t\Delta}f\|_p \, \leq\, \frac{\Gamma(\frac{d+s}2)2^{\frac{s}2}}{\Gamma(\frac{d}2)} [(1-\lambda)t]^{-\frac{s}2}\|e^{\lambda t\Delta}f\|_p \, .
        \end{split}
    \end{align}
    Next, we employ Lemma \ref{lem-int-smoothing} to obtain
    \begin{align} \label{eq-heat-ST-s=0-proof}
        \begin{split}
            \MoveEqLeft \|(-\Delta)^{\frac{s}2} e^{t\Delta}f\|_p \\
            \leq & \, \frac{\young(p,q,r,d)}{(4\pi)^{\frac{d}2(\frac1r-\frac1p)}q^{\frac{d}{2q}}}\frac{\Gamma(\frac{d+s}2)2^{\frac{s}2}}{\Gamma(\frac{d}2)} (1-\lambda)^{-\frac{s}2}\lambda^{-\frac{d}2(\frac1r-\frac1p)}\\
            & \, t^{-\frac{s}2-\frac{d}2(\frac1r-\frac1p)}\|f\|_r \, ,
        \end{split}
    \end{align}
    where $q\in[1,\infty]$ is given by
    \begin{equation}\label{eq-q-cond-proof}
        \frac1q+\frac1r \, = \, 1 +\frac1p \, .
    \end{equation}
    In case $r<p$, minimizing w.r.t. $\lambda\in[0,1]$ using Lemma \ref{lem-inv-pow-min} then yields
    \begin{align}\label{eq-heat-ST-s=0-proof-1}
        \begin{split}
            \MoveEqLeft \|(-\Delta)^{\frac{s}2} e^{t\Delta}f\|_p \\
            \leq & \, \frac{\young(p,q,r,d)}{(4\pi)^{\frac{d}2(\frac1r-\frac1p)}q^{\frac{d}{2q}}}\frac{\Gamma(\frac{d+s}2)2^{\frac{s}2}}{\Gamma(\frac{d}2)} \frac{\big(\frac{s}2+\frac{d}2(\frac1r-\frac1p)\big)^{\frac{s}2+\frac{d}2(\frac1r-\frac1p)}}{\big(\frac{s}2\big)^{\frac{s}2}\big(\frac{d}2(\frac1r-\frac1p)\big)^{\frac{d}2(\frac1r-\frac1p)}}\\
            & \, t^{-\frac{s}2-\frac{d}2(\frac1r-\frac1p)}\|f\|_r \, .
        \end{split}
    \end{align}
    Using \eqref{eq-q-cond-proof}, we write this in the slightly more compact form
    \begin{align}\label{eq-heat-ST-s=0-proof-2}
        \begin{split}
            \MoveEqLeft \|(-\Delta)^{\frac{s}2} e^{t\Delta}f\|_p \\
            \leq & \, \frac{\young(p,q,r,d)2^s}{(4\pi)^{\frac{d}2(\frac1r-\frac1p)}q^{\frac{d}{2q}}(q')^{\frac{d}{2q'}}}\frac{\Gamma(\frac{d+s}2)}{\Gamma(\frac{d}2)s^{\frac{s}2}} \Big[\frac{s}2+\frac{d}2\Big(\frac1r-\frac1p\Big)\Big]^{\frac{s}2+\frac{d}2(\frac1r-\frac1p)}\\
            & \, t^{-\frac{s}2-\frac{d}2(\frac1r-\frac1p)}\|f\|_r \, .
        \end{split}
    \end{align}
    If $r=p$, the minimum in \eqref{eq-heat-ST-s=0-proof} is attained at $\lambda=0$, yielding
    \begin{align}
        \begin{split}
            \MoveEqLeft \|(-\Delta)^{\frac{s}2} e^{t\Delta}f\|_p \\
            \leq & \, \young(p,1,p,d)\frac{\Gamma(\frac{d+s}2)2^{\frac{s}2}}{\Gamma(\frac{d}2)}  t^{-\frac{s}2}\|f\|_p \, .
        \end{split}
    \end{align}
    This finishes the proof.
    \end{proof}

    For the next statement, Let $q\in[1,\infty]$ be given by
    \begin{equation}    
        \frac1q+\frac1r \, = \, 1 +\frac1p \, .
    \end{equation}
    We abbreviate the constant obtained for $s\in 2\N_0$ in Proposition \ref{prop-par-est-int} according to \eqref{def-heat-int-r<p} and \eqref{def-heat-int-r=p}
    \begin{align} 
        \begin{split}
        \MoveEqLeft \heat(p,r,s,d) \\
        & = \, \frac{\young(p,q,r,d)2^s}{(4\pi)^{\frac{d}2(\frac1r-\frac1p)}q^{\frac{d}{2q}}(q')^{\frac{d}{2q'}}}\frac{\Gamma(\frac{d+s}2)}{\Gamma(\frac{d}2)s^{\frac{s}2}} \Big[\frac{s}2+\frac{d}2\Big(\frac1r-\frac1p\Big)\Big]^{\frac{s}2+\frac{d}2(\frac1r-\frac1p)} \, ,
        \end{split}
    \end{align}
    in case $r<p$, and, if $r=p$,
    \begin{equation}
        \heat(p,p,s,d) \, = \, \young(p,1,p,d)\frac{\Gamma(\frac{d+s}2)2^{\frac{s}2}}{\Gamma(\frac{d}2)} \, .
    \end{equation}
    
    \begin{proposition}[Proof of \eqref{eq-frac-smoothing-form}]\label{prop-par-est-frac}
        Assume $f\in C^\infty_c(\R^d)$, $s>0$ s.t. $\{s/2\}>0$, and $1\leq r\leq p\leq \infty$. 
        Then we have
        \begin{align} \label{eq-par-est-frac}
            \begin{split}
                \MoveEqLeft \|(-\Delta)^{\frac{s}2} e^{t\Delta}f\|_p \\
                \leq & \, \frac{\heat(p,r,2\floor{\frac{s}2}+2,d)\Gamma(\frac{s}2+\frac{d}2(\frac1r-\frac1p))}{\Gamma(\floor{\frac{s}2}+1+\frac{d}2(\frac1r-\frac1p))} t^{-\frac{s}2-\frac{d}2(\frac1r-\frac1p)}\|f\|_r \, .
            \end{split}
        \end{align}
    \end{proposition}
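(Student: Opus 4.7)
The strategy is to reduce the fractional-exponent case to the even-integer case already handled in Proposition~\ref{prop-par-est-int}, using the heat-kernel representation \eqref{id-inv-lapl} of the inverse fractional Laplacian. Write $s/2 = n - \sigma$ where $n := \floor{s/2} + 1 \in \N$ and $\sigma := n - s/2 = 1 - \{s/2\} \in (0,1)$, so that $2n = 2\floor{s/2} + 2$. Then factor $(-\Delta)^{s/2} = (-\Delta)^n (-\Delta)^{-\sigma}$, apply \eqref{id-inv-lapl} together with the semigroup property $e^{u\Delta}e^{t\Delta} = e^{(u+t)\Delta}$, and interchange $(-\Delta)^n$ with the $u$-integral (legitimate since $f \in C^\infty_c$ so the integrand is Schwartz in $x$ uniformly in $u$ on compacts) to obtain
\begin{align*}
(-\Delta)^{s/2} e^{t\Delta} f \, = \, \frac{1}{\Gamma(\sigma)} \int_0^\infty du \, u^{\sigma-1} (-\Delta)^n e^{(u+t)\Delta} f \, .
\end{align*}

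Applying Minkowski's inequality in $L^p$ and then invoking Proposition~\ref{prop-par-est-int} at the even integer $2n$ produces
\begin{align*}
\|(-\Delta)^{s/2} e^{t\Delta} f\|_p \, \leq \, \frac{\heat(p,r,2n,d)}{\Gamma(\sigma)} \|f\|_r \int_0^\infty du \, u^{\sigma-1} (u+t)^{-n - \frac{d}{2}(\frac{1}{r} - \frac{1}{p})} \, .
\end{align*}
The substitution $u = tv$ extracts the prefactor $t^{\sigma - n - \frac{d}{2}(\frac{1}{r}-\frac{1}{p})} = t^{-s/2 - \frac{d}{2}(\frac{1}{r}-\frac{1}{p})}$ and leaves the Beta integral
\begin{align*}
\int_0^\infty dv \, v^{\sigma-1}(1+v)^{-\alpha} \, = \, \frac{\Gamma(\sigma) \Gamma(\alpha - \sigma)}{\Gamma(\alpha)}
\end{align*}
with $\alpha := n + \frac{d}{2}(\frac{1}{r}-\frac{1}{p})$ and $\alpha - \sigma = \frac{s}{2} + \frac{d}{2}(\frac{1}{r}-\frac{1}{p})$. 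The two $\Gamma(\sigma)$ factors cancel, leaving precisely the constant displayed in \eqref{eq-par-est-frac} once one recalls $n = \floor{s/2} + 1$.

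There is no genuine obstacle. The only points meriting a sanity check are (i) absolute convergence of the $u$-integral, which holds since $\sigma > 0$ ensures integrability at $u = 0$ and $n \geq 1 > \sigma$ forces polynomial decay of order $< -1$ at $u = \infty$, and (ii) justification of Minkowski and the Fubini-type interchange, which follow from the uniform pointwise bound on $(-\Delta)^n e^{(u+t)\Delta} f$ already produced by Lemma~\ref{lem-dif-smoothing-form}. The step one should be most careful about is the algebraic identification $\alpha - \sigma = s/2 + \tfrac{d}{2}(\tfrac{1}{r}-\tfrac{1}{p})$, since it is exactly this cancellation that forces the $t$-exponent and the Gamma ratio to land on the values announced in the statement.
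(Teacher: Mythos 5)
Your proposal is correct and follows essentially the same route as the paper: both reduce to the even-integer estimate of Proposition~\ref{prop-par-est-int} at order $2\floor{s/2}+2$ under a $u^{\sigma-1}$-weighted heat-subordination integral (with $\sigma=1-\{s/2\}$), and both evaluate the resulting integral via the Beta function of Lemma~\ref{lem-beta} after rescaling $u=tv$. The only cosmetic difference is how the subordination identity is obtained -- you factor $(-\Delta)^{s/2}=(-\Delta)^{\floor{s/2}+1}(-\Delta)^{-\sigma}$ and invoke \eqref{id-inv-lapl}, while the paper factors off the positive fractional part $(-\Delta)^{\{s/2\}}$ and uses its Balakrishnan representation -- but the resulting integral formula, constants, and $t$-exponent are identical.
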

    \begin{proof}
    We start by recalling the definition of the fractional via the heat map, see e.g., \cite{balakrishnan60,kato-frac-power-61,komatsu66,stein-sing-diff-70,stinga-phd,stinga-handbook-2019,Yosida-FuncAna}. Let $f\in C^\infty_c(\R^d)$. For all $0<\alpha<1$, we have that 
    \begin{align}
        \begin{split}
            |\nabla|^{2\alpha}f & = \, (-\Delta)^\alpha f\\
            & = \, \frac1{\Gamma(-\alpha)}\int_0^\infty \frac{dt}{t^{1+\alpha}}\Big(e^{t\Delta}f-f\Big) \\
            &= \, \frac1{\Gamma(-\alpha)}\int_0^\infty \frac{dt}{t^{1+\alpha}}\int_0^t d\tau \, e^{\tau\Delta}\Delta f \, ,
        \end{split}
    \end{align}
    where $\Gamma$ denotes the Gamma function. Using Fubini, we thus obtain
    \begin{align} \label{id-frac-lap}
        \begin{split}
            |\nabla|^{2\alpha}f & = \, (-\Delta)^\alpha f\\
            &= \, \frac1{\Gamma(-\alpha)}\int_0^\infty dt \, e^{\tau\Delta}\Delta f\int_t^\infty \frac{d\tau}{\tau^{1+\alpha}} \\
            &= \, \frac1{\Gamma(1-\alpha)}\int_0^\infty \frac{dt}{t^\alpha} (-\Delta)e^{t\Delta} f \, ,
        \end{split}
    \end{align}
    where we employed the fact that $\Gamma(z+1)=z\Gamma(z)$ for all $\Re(z)>0$.
    \par Then \eqref{id-frac-lap} implies that
    \begin{align}
        \begin{split}
            \MoveEqLeft \|(-\Delta)^{\frac{s}2} e^{t\Delta}f\|_p \\
            &\leq \, \frac1{\Gamma(1-\{\frac{s}2\})}\int_0^\infty \frac{d\tau}{\tau^{\{\frac{s}2\}}} \|(-\Delta)^{\floor{\frac{s}2}+1}e^{(t+\tau)\Delta} f\|_p \, ,
        \end{split}
    \end{align}
    by the semi-group property of $e^{t\Delta}$. Proposition \ref{prop-par-est-int} then implies 
    \begin{align} \label{eq-heat-ST-proof>0-1}
        \begin{split}
            \MoveEqLeft \|(-\Delta)^{\frac{s}2} e^{t\Delta}f\|_p \\
            \leq & \, \frac{\heat(p,r,2\floor{\frac{s}2}+2,d)}{\Gamma(1-\{\frac{s}2\})}\|f\|_r \\
            & \, \int_0^\infty d\tau \, \tau^{-\{s/2\}} (t+\tau)^{-\floor{\frac{s}2}-1-\frac{d}2(\frac1r-\frac1p)} \, .
        \end{split}
    \end{align}
    In order to evaluate the integral, we substitute $\tau\to t\tau$ to obtain
    \begin{align}\label{eq-heat-ST-t-int-0}
        \begin{split}
        \MoveEqLeft\int_0^\infty d\tau \, \tau^{-\{s/2\}} (t+\tau)^{-\floor{\frac{s}2}-1-\frac{d}2(\frac1r-\frac1p)} \\
        & = \, t^{-\frac{s}2-\frac{d}2(\frac1r-\frac1p)}\int_0^\infty d\tau \, \tau^{-\{s/2\}} (1+\tau)^{-\floor{\frac{s}2}-1-\frac{d}2(\frac1r-\frac1p)} \, .
        \end{split}
    \end{align}
    By Lemma \ref{lem-beta}, we hence have that
    \begin{align}\label{eq-heat-ST-t-int-1}
        \begin{split}
        \MoveEqLeft\int_0^\infty d\tau \, \tau^{-\{s/2\}} (t+\tau)^{-\floor{\frac{s}2}-1-\frac{d}2(\frac1r-\frac1p)} \\
        & = \, \frac{\Gamma(\frac{s}2+\frac{d}2(\frac1r-\frac1p))\Gamma(1-\{\frac{s}2\})}{\Gamma(\floor{\frac{s}2}+1+\frac{d}2(\frac1r-\frac1p))}t^{-\frac{s}2-\frac{d}2(\frac1r-\frac1p)} \, .
        \end{split}
    \end{align}
    Plugging \eqref{eq-heat-ST-t-int-1} into \eqref{eq-heat-ST-proof>0-1} and rearranging factors yields
    \begin{align} 
        \begin{split}
            \MoveEqLeft \|(-\Delta)^{\frac{s}2} e^{t\Delta}f\|_p \\
            \leq & \, \frac{\heat(p,r,2\floor{\frac{s}2}+2,d)\Gamma(\frac{s}2+\frac{d}2(\frac1r-\frac1p))}{\Gamma(\floor{\frac{s}2}+1+\frac{d}2(\frac1r-\frac1p))} \, t^{-\frac{s}2-\frac{d}2(\frac1r-\frac1p)}\|f\|_r \, ,
        \end{split}
    \end{align}
    as we wanted to show.
    \end{proof}

    \begin{proposition}[Proof of \eqref{eq-inv-smoothing-form}]\label{prop-par-est-inv}
        Assume $f\in C^\infty_c(\R^d)$, $1\leq r< p\leq \infty$, and
        \begin{equation}\label{eq-s-inv-par-est-cond}
            0 \, < \, \sigma \, < \, \frac{d}2\Big(\frac1r-\frac1p\Big) \, .
        \end{equation}
        Let $q\in[1,\infty]$ be given by
        \begin{equation}
            1+\frac1p \, = \,\frac1q +\frac1r \, .
        \end{equation}
        Then we have
        \begin{align}
            \begin{split}
                \MoveEqLeft\||\nabla|^{-2\sigma}e^{t\Delta}f\|_p \, \leq \, \frac{\young(p,q,r,d)}{(4\pi)^{\frac{d}2(\frac1r-\frac1p)}q^{\frac{d}{2q}}}\frac{\Gamma(\frac{d}2(\frac1r-\frac1p)-\sigma)}{\Gamma(\frac{d}2(\frac1r-\frac1p))}t^{\sigma-\frac{d}2(\frac1r-\frac1p)}\|f\|_r \, .
            \end{split}
        \end{align}
    \end{proposition}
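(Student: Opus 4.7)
The plan is to exploit the heat-kernel representation of the negative fractional power of the Laplacian already recorded in \eqref{id-inv-lapl}. Writing
\begin{align*}
|\nabla|^{-2\sigma} e^{t\Delta} f \, = \, \frac{1}{\Gamma(\sigma)}\int_0^\infty d\tau \, \tau^{\sigma-1} e^{(t+\tau)\Delta} f
\end{align*}
and applying Minkowski's integral inequality in $L^p$, followed by Lemma \ref{lem-int-smoothing}, should yield
\begin{align*}
\||\nabla|^{-2\sigma} e^{t\Delta} f\|_p \, \leq \, \frac{\young(p,q,r,d)}{\Gamma(\sigma)(4\pi)^{\frac{d}{2}(\frac{1}{r}-\frac{1}{p})} q^{\frac{d}{2q}}}\|f\|_r \int_0^\infty d\tau \, \tau^{\sigma-1} (t+\tau)^{-\frac{d}{2}(\frac{1}{r}-\frac{1}{p})} \, .
\end{align*}

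Next, mimicking exactly the integral manipulation at the end of the proof of Proposition \ref{prop-par-est-frac}, I would substitute $\tau \to t\tau$ to extract the prefactor $t^{\sigma - \frac{d}{2}(\frac{1}{r}-\frac{1}{p})}$, and then invoke Lemma \ref{lem-beta} to identify the remaining Beta-type integral as
\begin{align*}
\int_0^\infty d\tau \, \tau^{\sigma-1}(1+\tau)^{-\frac{d}{2}(\frac{1}{r}-\frac{1}{p})} \, = \, \frac{\Gamma(\sigma)\,\Gamma\big(\frac{d}{2}(\frac{1}{r}-\frac{1}{p})-\sigma\big)}{\Gamma\big(\frac{d}{2}(\frac{1}{r}-\frac{1}{p})\big)} \, ,
\end{align*}
the integrability of which is precisely guaranteed by the two-sided condition \eqref{eq-s-inv-par-est-cond}. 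Cancelling the $\Gamma(\sigma)$ prefactor then gives the claimed bound.

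The one point that merits care, rather than being a genuine obstacle, is justifying the representation formula together with the use of Minkowski's inequality when $f \in C_c^\infty(\R^d)$: concretely one must check that $\tau \mapsto \tau^{\sigma-1}\|e^{(t+\tau)\Delta}f\|_p$ is integrable on $(0,\infty)$. Integrability near $\tau = 0$ is immediate from $\sigma > 0$ together with the contraction bound $\|e^{(t+\tau)\Delta}f\|_p \leq \|f\|_p$ (valid for all $p\in[1,\infty]$ since $\|G_{t+\tau}\|_1 = 1$). Integrability near $\tau = \infty$ is obtained by applying Lemma \ref{lem-int-smoothing} with source norm $\|f\|_1$, which gives decay of order $\tau^{-\frac{d}{2}(1-1/p)}$, and the chain of inequalities $\sigma < \frac{d}{2}(\frac{1}{r}-\frac{1}{p}) \leq \frac{d}{2}(1-\frac{1}{p})$ is exactly what is needed. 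Beyond this verification, the entire argument is a direct computation whose structure parallels the proofs of Propositions \ref{prop-par-est-int} and \ref{prop-par-est-frac}, so no further difficulty is anticipated.
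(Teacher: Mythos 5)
Your proposal is correct and follows essentially the same route as the paper's proof: the heat-kernel representation \eqref{id-inv-lapl} combined with the semigroup property, Minkowski's inequality, Lemma \ref{lem-int-smoothing}, the substitution $\tau\to t\tau$, and Lemma \ref{lem-beta} under the condition \eqref{eq-s-inv-par-est-cond}, with the $\Gamma(\sigma)$ factors cancelling. The additional integrability check you include to justify Minkowski is a harmless (and correct) extra verification that the paper leaves implicit.
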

    \begin{proof}
        Using \eqref{id-inv-lapl}, we obtain
        \begin{equation}
            \||\nabla|^{-2\sigma}e^{t\Delta}f\|_p \, \leq\, \frac1{\Gamma(\sigma)}\int_0^\infty d\tau \, \tau^{\sigma-1} \|e^{(t+\tau)\Delta} f\|_p \, .
        \end{equation}
        Lemma \ref{lem-int-smoothing} then implies
        \begin{equation}
            \||\nabla|^{-2\sigma}e^{t\Delta}f\|_p \, \leq\, \frac{\young(p,q,r,d)}{(4\pi)^{\frac{d}2(\frac1r-\frac1p)}q^{\frac{d}{2q}}}\frac{\|f\|_r}{\Gamma(\sigma)}\int_0^\infty d\tau \, \tau^{\sigma-1}(t+\tau)^{-\frac{d}2(\frac1r-\frac1p)} \, .
        \end{equation}
        Substituting $\tau\to t\tau$ and applying Lemma \ref{lem-beta} using assumption \eqref{eq-s-inv-par-est-cond}, as in the proof of Proposition \ref{prop-par-est-frac}, yields
        \begin{equation}
            \||\nabla|^{-2\sigma}e^{t\Delta}f\|_p \, \leq\, \frac{\young(p,q,r,d)}{(4\pi)^{\frac{d}2(\frac1r-\frac1p)}q^{\frac{d}{2q}}}\frac{\Gamma(\frac{d}2(\frac1r-\frac1p)-\sigma)}{\Gamma(\frac{d}2(\frac1r-\frac1p))}t^{\sigma-\frac{d}2(\frac1r-\frac1p)}\|f\|_r \, ,
        \end{equation}
        which is what we intended to prove.
    \end{proof}

    \section{Proof of Theorem \ref{thm-GNS}}
    
        Let $s,s_1,s_2\in \R$, and $p,p_1,p_2 \in [1,\infty]$ satisfy \eqref{ass-para}, $\theta\in(0,1)$ be given by \eqref{def-theta}, and assume $f\in C^\infty_c(\R^d)$. Let $\sigma>0$ satisfy
        \begin{equation}\label{ass-sigma}
            \sigma \, > \, \frac{s_2-s}2 \, - \, \frac{d}{2p_2} \, .
        \end{equation}
        Let $t_0>0$ be arbitrary, to be determined below. Using Minkowski's inequality, \eqref{id-inv-lapl} implies
        \begin{align} \label{eq-GNS-decomp}
            \begin{split}
            \MoveEqLeft\||\nabla|^s f\|_p \, = \, \||\nabla|^{-2\sigma}|\nabla|^{s+2\sigma} f\|_p\\
            \leq & \, \frac1{\Gamma(\sigma)}\Big(\int_0^{t_0} dt \, t^{\sigma-1} \||\nabla|^{s+2\sigma}e^{t\Delta} f \|_p \,+ \, \int_{t_0}^\infty dt \, t^{\sigma-1} \||\nabla|^{s+2\sigma}e^{t\Delta} f\|_p\Big) \, .
        \end{split}
    \end{align}
    Now fix some $\beta_1\in(\theta,1]$ and $\beta_2\in[0,\theta)$. As a consequence of \eqref{ass-para} and \eqref{ass-sigma}, there exist $r_1,r_2\in[1,\infty]$ such that
    \begin{align}
        \frac1p \, - \, (1-\beta_1)\Big(\frac1{p_{2}}-\frac{s_{2}-s-2\sigma}d\Big) \, < \, \frac{\beta_1}{r_1} \, < \, \beta_1\Big(\frac1{p_1}-\frac{s_1-s-2\sigma}d\Big) \, \label{ass-r1}\\
        \frac1p \, - \, \beta_2\Big(\frac1{p_1}-\frac{s_1-s-2\sigma}d\Big) \, < \, \frac{1-\beta_2}{r_2} \, < \, (1-\beta_2)\Big(\frac1{p_2}-\frac{s_2-s-2\sigma}d\Big) \, , \label{ass-r2}
    \end{align}
    unless $\beta_2=1-\beta_1=0$. If $\beta_2=1-\beta_1=0$, we choose $r_j=p$, and have by assumptions \eqref{ass-para} and \eqref{ass-sigma} that 
    \begin{align} \label{eq-beta2=1-beta1=0}
        \frac1{r_j}\, = \, \frac1p \, < \, \frac1{p_j} - \frac{s_j-s-2\sigma}d \, .
    \end{align}
    \par In fact, by assumptions \eqref{ass-para} and \eqref{ass-sigma}, we have that
    \begin{align}
            \frac1{p_j}-\frac{s_j-s-2\sigma}d \, &= \, \Big(\frac1{p_j}-\frac1p-\frac{s_j-s}d\Big) \, + \, \frac1p \, + \,\frac{2\sigma}d \, > \, 0 \, .
    \end{align}
    In addition, employing \eqref{def-theta}, we have that
    \begin{align}
        \begin{split}
        \MoveEqLeft \frac1p \, - \, (1-\beta_1)\Big(\frac1{p_{2}}-\frac{s_{2}-s-2\sigma}d\Big) \, < \, \beta_1\Big(\frac1{p_1}-\frac{s_1-s-2\sigma}d\Big) \\
        \Leftrightarrow & \, (\beta_1-\theta)\Big(\frac1{p_1}-\frac1{p_2}-\frac{s_1-s_2}d\Big) \, > \, 0 \, ,
        \end{split}
    \end{align}
    which is satisfied by assumptions \eqref{ass-para} and $\beta_1>\theta$, and analogously
    \begin{align}
        \begin{split}
        \MoveEqLeft \frac1p \, - \, \beta_2\Big(\frac1{p_1}-\frac{s_1-s-2\sigma}d\Big) \, < \, (1-\beta_2)\Big(\frac1{p_2}-\frac{s_2-s-2\sigma}d\Big) \\
        \Leftrightarrow & \, (\theta-\beta_2)\Big(\frac1{p_1}-\frac1{p_2}-\frac{s_1-s_2}d\Big) \, > \, 0 \, ,
        \end{split}
    \end{align}
    due to \eqref{ass-para} and $\beta_2<\theta$.
    \par Let $q_1,q_2\in[1,\infty]$ be given by
    \begin{equation}\label{def-qj}
        \frac1p \, = \, \frac{\beta_1}{r_1} \, + \, \frac{1-\beta_1}{q_1} \, = \, \frac{1-\beta_2}{r_2} \, + \, \frac{\beta_2}{q_2} \, .
    \end{equation}
    In case that $\beta_2=1-\beta_1=0$, we have $r_j=p$ and choose $q_j\in[1,\infty]$ arbitrary.
    In particular, if $\beta_j\in(0,1)$, due to \eqref{ass-r1}, $q_1$ satisfies
    \begin{equation}\label{ass-q1}
        \frac1p \, - \, \beta_1\Big(\frac1{p_1}-\frac{s_1-s-2\sigma}d\Big) \, < \, \frac{1-\beta_1}{q_1} \, < \, (1-\beta_1)\Big(\frac1{p_2}-\frac{s_2-s-2\sigma}d\Big) \, ,
    \end{equation}
    and due to \eqref{ass-r2}, $q_2$ satisfies
    \begin{equation}\label{ass-q2}
        \frac1p \, - \, (1-\beta_2)\Big(\frac1{p_2}-\frac{s_2-s-2\sigma}d\Big) \, < \, \frac{\beta_2}{q_2} \, < \, \beta_2\Big(\frac1{p_1}-\frac{s_1-s-2\sigma}d\Big) \, .
    \end{equation}
    Using H\"older's inequality, \eqref{eq-GNS-decomp} then implies
    \begin{align} \label{eq-GNS-hoelder}
        \begin{split}
            \MoveEqLeft\||\nabla|^s f\|_p \\
            \leq & \, \frac1{\Gamma(\sigma)}\Big(\int_0^{t_0} dt \, t^{\sigma-1} \||\nabla|^{s+2\sigma}e^{t\Delta} f \|_{q_2}^{\beta_2}\||\nabla|^{s+2\sigma}e^{t\Delta} f \|_{r_2}^{1-\beta_2} \\
            & + \, \int_{t_0}^\infty dt \, t^{\sigma-1} \||\nabla|^{s+2\sigma}e^{t\Delta} f \|_{r_1}^{\beta_1}\||\nabla|^{s+2\sigma}e^{t\Delta} f \|_{q_1}^{1-\beta_1}\Big) \, .
        \end{split}
    \end{align}
    Using \eqref{ass-r1}, \eqref{ass-r2}, \eqref{ass-q1}, and \eqref{ass-q2}, and, in case $\beta_2=1-\beta_1=0$, \eqref{eq-beta2=1-beta1=0}, Theorem \ref{thm-heat-ST} then implies
    \begin{align}
        \begin{split}
            \MoveEqLeft\||\nabla|^s f\|_p \\
            \leq & \, \frac1{\Gamma(\sigma)}\Big(\heat(q_2,p_1,s+2\sigma-s_1,d)^{\beta_2}\heat(r_2,p_2,s+2\sigma-s_2,d)^{1-\beta_2}\\
            & \, \||\nabla|^{s_1}f\|_{p_1}^{\beta_2}\||\nabla|^{s_2}f\|_{p_2}^{1-\beta_2}\\
            & \, \int_0^{t_0} dt \, t^{\sigma-1-\beta_2[\frac{s+2\sigma-s_1}2+\frac{d}2(\frac1{p_1}-\frac1{q_2})]-(1-\beta_2)[\frac{s+2\sigma-s_2}2+\frac{d}2(\frac1{p_2}-\frac1{r_2})]} \\
            & + \, \heat(r_1,p_1,s+2\sigma-s_1,d)^{\beta_1}\heat(q_1,p_2,s+2\sigma-s_2,d)^{1-\beta_1}\\
            & \, \||\nabla|^{s_1}f\|_{p_1}^{\beta_1}\||\nabla|^{s_2}f\|_{p_2}^{1-\beta_1}\\
            & \int_{t_0}^\infty dt \, t^{\sigma-1-\beta_1[\frac{s+2\sigma-s_1}2+\frac{d}2(\frac1{p_1}-\frac1{r_1})]-(1-\beta_1)[\frac{s+2\sigma-s_2}2+\frac{d}2(\frac1{p_2}-\frac1{q_1})]}\Big) \, .
        \end{split}
    \end{align}
    Recalling the definition of $\theta$ \eqref{def-theta} and using \eqref{def-qj}, we can simplify this expression as
    \begin{align} 
        \begin{split}
            \MoveEqLeft\||\nabla|^s f\|_p \\
            \leq & \, \frac1{\Gamma(\sigma)}\Big(\heat(q_2,p_1,s+2\sigma-s_1,d)^{\beta_2}\heat(r_2,p_2,s+2\sigma-s_2,d)^{1-\beta_2}\\
            & \, \||\nabla|^{s_1}f\|_{p_1}^{\beta_2}\||\nabla|^{s_2}f\|_{p_2}^{1-\beta_2} \, \int_0^{t_0} dt \, t^{-1+(\theta-\beta_2)\frac{d}2(\frac1{p_1}-\frac1{p_2}-\frac{s_1-s_2}d)} \\
            & + \, \heat(r_1,p_1,s+2\sigma-s_1,d)^{\beta_1}\heat(q_1,p_2,s+2\sigma-s_2,d)^{1-\beta_1}\\
            & \, \||\nabla|^{s_1}f\|_{p_1}^{\beta_1}\||\nabla|^{s_2}f\|_{p_2}^{1-\beta_1} \int_{t_0}^\infty dt \, t^{-1-(\beta_1-\theta)\frac{d}2(\frac1{p_1}-\frac1{p_2}-\frac{s_1-s_2}d)}\Big) \, .
        \end{split}
    \end{align}
    The integrals are finite due to the assumptions \eqref{ass-para}, $\beta_1>\theta$, and $\beta_2<\theta$. In particular, we obtain
    \begin{align} \label{eq-GNS-par-est}
        \begin{split}
            \MoveEqLeft\||\nabla|^s f\|_p \\
            \leq & \, \frac1{\Gamma(\sigma)}\Big(\heat(q_2,p_1,s+2\sigma-s_1,d)^{\beta_2}\heat(r_2,p_2,s+2\sigma-s_2,d)^{1-\beta_2}\\
            & \, \||\nabla|^{s_1}f\|_{p_1}^{\beta_2}\||\nabla|^{s_2}f\|_{p_2}^{1-\beta_2} \,  \frac{t_0^{(\theta-\beta_2)\frac{d}2(\frac1{p_1}-\frac1{p_2}-\frac{s_1-s_2}d)}}{(\theta-\beta_2)\frac{d}2\big(\frac1{p_1}-\frac1{p_2}-\frac{s_1-s_2}d\big)} \\
            & + \, \heat(r_1,p_1,s+2\sigma-s_1,d)^{\beta_1}\heat(q_1,p_2,s+2\sigma-s_2,d)^{1-\beta_1}\\
            & \, \||\nabla|^{s_1}f\|_{p_1}^{\beta_1}\||\nabla|^{s_2}f\|_{p_2}^{1-\beta_1} \frac{t_0^{-(\beta_1-\theta)\frac{d}2(\frac1{p_1}-\frac1{p_2}-\frac{s_1-s_2}d)}}{(\beta_1-\theta)\frac{d}2\big(\frac1{p_1}-\frac1{p_2}-\frac{s_1-s_2}d\big)}\Big) \, .
        \end{split}
    \end{align}
    In particular, since we may choose $\beta_j\in(0,1)$, we may assume w.l.o.g. that $\||\nabla|^{s_2}f\|_{p_2}>0$, for otherwise both sides vanish. In order to equate the coefficients, we choose
    \begin{align} \label{eq-t0-choice}
        \begin{split}
        \MoveEqLeft t_0^{(\beta_1-\beta_2)\frac{d}2(\frac1{p_1}-\frac1{p_2}-\frac{s_1-s_2}d)} \\
        =& \, \Bigg(\frac{\||\nabla|^{s_1}f\|_{p_1}}{\||\nabla|^{s_2}f\|_{p_2}}\Bigg)^{\beta_1-\beta_2}\frac{\theta-\beta_2}{\beta_1-\theta}\\
        & \, \frac{\heat(r_1,p_1,s+2\sigma-s_1,d)^{\beta_1}\heat(q_1,p_2,s+2\sigma-s_2,d)^{1-\beta_1}}{\heat(q_2,p_1,s+2\sigma-s_1,d)^{\beta_2}\heat(r_2,p_2,s+2\sigma-s_2,d)^{1-\beta_2}} \, .
        \end{split}
    \end{align}
    Plugging this choice for $t_0$ into \eqref{eq-GNS-par-est} yields
    \begin{align} 
        \begin{split}
            \MoveEqLeft\||\nabla|^s f\|_p \\
            \leq & \, \frac4{d \, \Gamma(\sigma)\big(\frac1{p_1}-\frac1{p_2}-\frac{s_1-s_2}d\big)}\||\nabla|^{s_1}f\|_{p_1}^{\theta}\||\nabla|^{s_2}f\|_{p_2}^{1-\theta}\\
            & \, \Bigg(\frac{\heat(q_2,p_1,s+2\sigma-s_1,d)^{\beta_2}\heat(r_2,p_2,s+2\sigma-s_2,d)^{1-\beta_2}}{\theta-\beta_2}\Bigg)^{\frac{\theta-\beta_2}{\beta_1-\beta_2}}\\
            & \, \Bigg(\frac{\heat(r_1,p_1,s+2\sigma-s_1,d)^{\beta_1}\heat(q_1,p_2,s+2\sigma-s_2,d)^{1-\beta_1}}{\beta_1-\theta}\Bigg)^{\frac{\beta_1-\theta}{\beta_1-\beta_2}} \, .
        \end{split}
    \end{align}
    Finally, we minimize w.r.t. 
    \begin{equation}
        (\beta_1,\beta_2,r_1,r_2,\sigma)\in(\theta,1]\times[0,\theta)\times[1,\infty]^2\times(0,\infty)
    \end{equation}
    such that the constraints \eqref{ass-sigma}, \eqref{ass-r1}, \eqref{ass-r2} are satisfied, and where $q_j$ is given by \eqref{def-qj}, unless $\beta_2=1-\beta_1=0$. In the latter case, recall that we chose $q_j\in[1,\infty]$ arbitrary.
    
    \appendix

    \section{Helpful tools}

    \begin{lemma}\label{lem-inv-pow-min}
        Let $\alpha,\beta>0$. Then we have
        \begin{equation}
            \min_{\lambda\in[0,1]}\lambda^{-\alpha}(1-\lambda)^{-\beta} \, = \, \frac{(\alpha+\beta)^{\alpha+\beta}}{\alpha^\alpha\beta^\beta} \, .    
        \end{equation}
    \end{lemma}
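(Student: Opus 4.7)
The plan is a straightforward one-variable optimization. The function $f(\lambda) := \lambda^{-\alpha}(1-\lambda)^{-\beta}$ is smooth on the open interval $(0,1)$, and since $\alpha,\beta>0$ it blows up as $\lambda \to 0^+$ or $\lambda\to 1^-$. Hence the minimum over $[0,1]$ (interpreting the endpoints as $+\infty$) is attained at an interior critical point, and it suffices to locate that point.

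To find it, I would first pass to the logarithm $g(\lambda) := \log f(\lambda) = -\alpha\log\lambda - \beta\log(1-\lambda)$, which linearizes the exponents and is strictly convex on $(0,1)$ because
\begin{equation}\nonumber
g''(\lambda) \, = \, \frac{\alpha}{\lambda^2}+\frac{\beta}{(1-\lambda)^2} \, > \, 0 \, .
\end{equation}
Strict convexity guarantees a unique critical point, which is the global minimum. Setting $g'(\lambda)= -\alpha/\lambda+\beta/(1-\lambda)=0$ gives $\beta\lambda=\alpha(1-\lambda)$, i.e.\ $\lambda^* = \alpha/(\alpha+\beta)$, so $1-\lambda^*=\beta/(\alpha+\beta)$.

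Substituting back yields
\begin{equation}\nonumber
f(\lambda^*) \, = \, \Big(\frac{\alpha}{\alpha+\beta}\Big)^{-\alpha}\Big(\frac{\beta}{\alpha+\beta}\Big)^{-\beta} \, = \, \frac{(\alpha+\beta)^{\alpha+\beta}}{\alpha^\alpha\beta^\beta} \, ,
\end{equation}
which is the claimed identity. There is no substantive obstacle: the only minor care needed is to justify that the minimum is interior (handled by the blow-up at the endpoints) and that the interior critical point is a global minimum (handled by strict convexity of $g$, equivalently by the second derivative test).
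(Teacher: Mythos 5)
Your proof is correct and follows essentially the same route as the paper: a one-variable calculus argument locating the unique interior critical point at $\lambda^*=\alpha/(\alpha+\beta)$ and evaluating there, with your passage to the logarithm and the convexity/endpoint remarks being minor (and welcome) additions in rigor rather than a different method.
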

    \begin{proof}
        We have that
        \begin{align}
            \frac{d}{d\lambda}\big(\lambda^{-\alpha}(1-\lambda)^{-\beta}\big) \, = \, \frac{-\alpha(1-\lambda)+\beta\lambda}{\lambda^{\alpha+1}(1-\lambda)^{\beta+1}} \, = \, \frac{(\alpha+\beta)\lambda -\alpha}{\lambda^{\alpha+1}(1-\lambda)^{\beta+1}} \, ,
        \end{align}
        which implies that the minimum is attained at $\lambda=\frac\alpha{\alpha+\beta}$. This concludes the proof.
    \end{proof}

    \begin{lemma}\label{lem-beta}
    Let $\alpha<1$ and $\beta>1-\alpha$. Then we have that
    \begin{equation}
        \int_0^\infty dx \, x^{-\alpha} (1+x)^{-\beta} \, = \, \frac{\Gamma(\alpha+\beta-1)\Gamma(1-\alpha)}{\Gamma(\beta)} \, .
    \end{equation}
    \end{lemma}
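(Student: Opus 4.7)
The plan is to reduce the integral to a standard Beta integral via a rational substitution. Specifically, I would set $u = \tfrac{x}{1+x}$, so that $x = \tfrac{u}{1-u}$, $1+x = \tfrac{1}{1-u}$, and $dx = \tfrac{du}{(1-u)^2}$. Under this change of variables, $x \in [0,\infty)$ maps bijectively onto $u \in [0,1)$, and the integrand transforms as
\begin{equation}
x^{-\alpha}(1+x)^{-\beta}\, dx \, = \, u^{-\alpha}(1-u)^{\alpha}\,(1-u)^{\beta}\,(1-u)^{-2}\,du \, = \, u^{-\alpha}(1-u)^{\alpha+\beta-2}\,du \, . \nonumber
\end{equation}
Thus the integral reduces to $\int_0^1 u^{(1-\alpha)-1}(1-u)^{(\alpha+\beta-1)-1}\,du$, which is exactly the Euler Beta function $B(1-\alpha,\alpha+\beta-1)$.

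Next I would invoke the classical identity $B(p,q) = \tfrac{\Gamma(p)\Gamma(q)}{\Gamma(p+q)}$ with $p = 1-\alpha$ and $q = \alpha+\beta-1$, noting that $p+q = \beta$, to conclude
\begin{equation}
\int_0^\infty x^{-\alpha}(1+x)^{-\beta}\,dx \, = \, \frac{\Gamma(1-\alpha)\Gamma(\alpha+\beta-1)}{\Gamma(\beta)} \, . \nonumber
\end{equation}
The only routine bookkeeping is to confirm that both Beta-function exponents are admissible: the hypothesis $\alpha < 1$ gives $1-\alpha > 0$, which secures integrability near $u=0$ (equivalently, near $x=0$), while the hypothesis $\beta > 1-\alpha$ gives $\alpha+\beta-1 > 0$, which secures integrability near $u=1$ (equivalently, decay as $x\to\infty$).

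There is no real obstacle here; the identity is a textbook application of the Beta function, and the only point requiring attention is matching the convergence hypotheses of the lemma with the positivity of the Beta parameters. If one prefers a proof that does not invoke the Beta--Gamma identity as a black box, an equally short alternative would be to write $(1+x)^{-\beta} = \tfrac{1}{\Gamma(\beta)}\int_0^\infty t^{\beta-1}e^{-(1+x)t}\,dt$, apply Fubini (justified by the hypotheses), perform the inner $x$-integral using the substitution $y = xt$ to obtain $t^{\alpha-1}\Gamma(1-\alpha)$, and then recognize the remaining $t$-integral as $\Gamma(\alpha+\beta-1)$. Either route yields the stated identity.
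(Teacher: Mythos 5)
Your proof is correct and follows essentially the same route as the paper: the substitution $u = x/(1+x)$, recognition of the Euler Beta integral $B(1-\alpha,\alpha+\beta-1)$, and the Beta--Gamma identity. The convergence check and the alternative Fubini-based argument are fine additions, but the core computation matches the paper's proof.
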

    \begin{proof}
        We start by recognizing that 
        \begin{equation}
            \int_0^\infty dx \, x^{-\alpha} (1+x)^{-\beta} \, = \, \int_0^\infty dx \, \Big(\frac{x}{1+x}\Big)^{-\alpha}\Big(\frac1{x+1}\Big)^{\alpha+\beta} \, .
        \end{equation}
        Substituting $\frac{x}{x+1}\to x$, we find that 
        \begin{equation}
            \int_0^\infty dx \, x^{-\alpha} (1+x)^{-\beta} \, = \, \int_0^1 dx \, x^{-\alpha}(1-x)^{\alpha+\beta-2} \, .
        \end{equation}
        Recognizing the Beta function, we thus have
        \begin{align}
            \begin{split}
                \int_0^\infty dx \, x^{-\alpha} (1+x)^{-\beta} \, =& \, B(1-\alpha,\alpha+\beta-1) \\
                =& \, \frac{\Gamma(\alpha+\beta-1)\Gamma(1-\alpha)}{\Gamma(\beta)} \, .
            \end{split}
        \end{align}
        With that, we finish the proof.
    \end{proof}

    \section{Proofs of error estimates\label{sec-error-est}}

    \begin{proof}[Proof of Proposition \ref{prop-riem-discr.}]
        We denote the Fourier transform
        \begin{equation}
            (\cF f)(p) \, := \,\hat{f}(p) \, := \, \int\dx{x} e^{-2\pi i p\cdot x}f(x) \, .
        \end{equation}
        Applying the Poisson summation formula, we then obtain that
        \begin{align}
            \label{eq-R-disc-PS}
            \begin{split}\vep^d \sum_{x\in\Z^d}f(\vep x)-\int_{\R^d}\dx{x}f(x) \, &= \, \sum_{p\in\Z^d}(\cF f(\vep \cdot))(p) - \int\dx{x}f(x)\\
            & = \, \sum_{p\in\Z^d\setminus\{0\}}\hat{f}\Big(\frac{p}\vep\Big) \, .
            \end{split}
        \end{align}
        Now observe that we have that
        \begin{equation}
            \Big| |p|^{d+\delta} \hat{f}\Big(\frac{p}\vep\Big)\Big| \, = \, \Big(\frac{\vep}{2\pi}\Big)^{d+\delta}\Big|\int\dx{x}e^{-2\pi ip\cdot x/\vep}|\nabla|^{d+\delta}f(x)\Big| \, \leq \, \Big(\frac{\vep}{2\pi}\Big)^{d+\delta}\|\nabla|^{d+\delta}f\|_1 \, .
        \end{equation}
        Employing this estimate and recalling \ref{def-lattice-const}, \eqref{eq-R-disc-PS} implies
        \begin{align}
            \Big|\vep^d \sum_{x\in\Z^d}f(\vep x)-\int_{\R^d}\dx{x}f(x)\Big| \, &= \, \Big|\sum_{p\in\Z^d\setminus\{0\}}\frac1{|p|^{d+\delta}} |p|^{d+\delta}\hat{f}\Big(\frac{p}\vep\Big)\Big|\\
            & \leq \, \Big(\frac{\vep}{2\pi}\Big)^{d+\delta}\Xi(d,\delta)\|\nabla|^{d+\delta}f\|_1 \, ,
        \end{align}
        concluding the proof.
    \end{proof}

    \begin{proof}[Proof of Lemma \ref{lem-lattice-const-bd}]
        We start by observing that we can write $\Z^d\setminus\{0\}$ as the disjoint union
        \begin{equation}
            \Z^d\setminus\{0\} \, = \, \bigcup_{k=1}^d\{\sigma(n,0) \mid n\in(\N\cup -\N)^k, \, \sigma\in \mathrm{Per}_d\} \, ,
        \end{equation}
        where $\mathrm{Per}_d$ is the set of all permutations acting on $d-$tupels $(n_1,\ldots,n_d)$. Consequently, we obtain that
        \begin{align}\label{eq-lattice-const-exp}
            \Xi(d,\delta) \, = \, \sum_{k=1}^d\binom{d}{k}2^k\sum_{n\in\N^k}\frac1{|n|^{d+\delta}} \, .
        \end{align}
        By the GM-QM inequality, we have that for all $n\in(\N\cup -\N)^k$
        \begin{equation}
            \sqrt{\frac{|n|^2}k} \, \geq \, \prod_{j=1}^k n_j^{\frac1k} \, .
        \end{equation}
        In particular, \eqref{eq-lattice-const-exp} implies that
        \begin{equation}
            \Xi(d,\delta) \, \leq \, \sum_{k=1}^d\binom{d}{k}2^k \frac1{k^{\frac{d+\delta}2}}\Big(\sum_{n\in\N}n^{-\frac{d+\delta}k}\Big)^k \, = \, \sum_{k=1}^d\binom{d}{k}2^k \frac1{k^{\frac{d+\delta}2}}\zeta\Big(\frac{d+\delta}k\Big)^k \, ,
        \end{equation}
        concluding the proof.
    \end{proof}

    \begin{proof}[Proof of Proposition \ref{prop-erg-thm}] 
        We adapt \cite[Proof of Prop. 3.5]{chenhott2023} to the present scenario. 
        \par We start by representing $f$ via the Fourier inversion theorem and obtain that
        \begin{align}
            \begin{split}\label{eq-erg-err-exp}
            \frac1n\sum_{k=1}^nf(x_0+k\alpha) - \int_0^1\dx{x}f(x) \, &= \, \sum_{p\in\Z}\hat{f}(p)\frac1n\sum_{k=1}^ne^{2\pi i p(x_0+k\alpha)}- \int_0^1\dx{x}f(x) \\
            & = \, \sum_{p\in\Z\setminus\{0\}}\hat{f}(p)\frac1n\sum_{k=1}^ne^{2\pi i p(x_0+k\alpha)} \, .
            \end{split}
        \end{align}
        We define the \emph{discrepancy function}
        \begin{equation}
            D_n(x) \, := \, \frac1n\sum_{k=1}^ne^{2\pi ikx} \, = \, e^{\pi i(n+1)x}\frac{\sin( n\pi x)}{n\sin(\pi x)} \, . 
        \end{equation}
        Now observe that we have that
        \begin{equation}
		      \frac{\dist(x,\Z)}{|\sin(\pi x)|} \, = \, \frac{\dist(x,\Z)}{|\sin\big(\pi \dist(x,\Z)\big)|} \, \leq\, \frac1{\pi}\sup_{0<y<\pi/2}\frac{y}{\sin(y)} \, \leq \, \frac12 \, .
	    \end{equation}
        Then \eqref{eq-erg-err-exp} implies that
        \begin{align}
            \Big|\frac1n\sum_{k=1}^nf(x_0+k\alpha) - \int_0^1\dx{x}f(x)\Big| \, &\leq \, \sum_{p\in\Z\setminus\{0\}}|\hat{f}(p)| |D_n(p\alpha)|\\
            & \leq \, \frac1{2n} \sum_{p\in\Z\setminus\{0\}}\frac{|\hat{f}(p)|}{\dist(p\alpha,\Z)}
        \end{align}
        Using now the Diophantine condition \eqref{def-diophantine}, we thus obtain
        \begin{align}
            \Big|\frac1n\sum_{k=1}^nf(x_0+k\alpha) - \int_0^1\dx{x}f(x)\Big| \, \leq \, \frac{\||\cdot|^{2+\sigma+\delta}\hat{f}\|_{\ell^1(\Z\setminus\{0\})}}{2Kn} \, ,
        \end{align}
        which finishes the proof.
    \end{proof}

    \bibliographystyle{plain}  
	\bibliography{references}  

\end{document}